\numberwithin{algorithm}{section}
\title{Structured Sparsity via Alternating Direction Methods\footnote{Research supported in part by DMS 10-16571, ONR Grant N00014-08-1-1118 and DOE Grant DE-FG02-08ER25856.}}
\author{Zhiwei (Tony) Qin \footnote{Email: zq2107@columbia.edu.}}
\author{Donald Goldfarb \footnote{Email: goldfarb@columbia.edu.}}
\affil{Department of Industrial Engineering and Operations Research \authorcr\itshape\small Columbia University \authorcr\itshape\small New York, NY 10027}
\begin{document}
\maketitle

\newtheorem{alg}{Algorithm}[section]
\newtheorem{thm}{Theorem}[section]
\newtheorem{cor}{Corollary}[section]
\newtheorem{lem}{Lemma}[section]
\newtheorem{rem}{Remark}[section]
\newcommand{\tildex}{\tilde{x}}
\newcommand{\tildey}{\tilde{y}}
\newcommand{\ybar}{\bar{y}}
\newcommand{\xbar}{\bar{x}}
\newcommand{\gradyfxybar}{\nabla_{\ybar}f(x,\ybar)}
\newcommand{\gradyfxy}{\nabla_{y}f(x,y)}
\newcommand{\gradxfxybar}{\nabla_xf(x,\ybar)}
\newcommand{\gradxfxy}{\nabla_xf(x,y)}
\newcommand{\glassonormy}{\lambda\sum_s \|y_s\|}
\newcommand{\mlassonormy}{\lambda\sum_s \|y_s\|_\infty}
\newcommand{\glassonormybar}{\lambda\sum_s \|\ybar_s\|}
\newcommand{\varpone}[2]{#1^{#2+1}}
\newcommand{\xnpone}{\varpone{x}{n}}
\newcommand{\ynpone}{\varpone{y}{n}}
\newcommand{\xkpone}{\varpone{x}{k}}
\newcommand{\ykpone}{\varpone{y}{k}}
\newcommand{\xKpone}{\varpone{x}{K}}
\newcommand{\yKpone}{\varpone{y}{K}}
\newcommand{\xlpone}{\varpone{x}{l}}
\newcommand{\ylpone}{\varpone{y}{l}}
\newcommand{\ybarnpone}{\ybar^{n+1}}
\newcommand{\xbarnpone}{\xbar^{n+1}}
\newcommand{\ybarkpone}{\ybar^{k+1}}
\newcommand{\xbarkpone}{\xbar^{k+1}}
\newcommand{\xbarKpone}{\varpone{\xbar}{K}}
\newcommand{\ybarKpone}{\varpone{\ybar}{K}}
\newcommand{\zkpone}{z^{k+1}}
\newcommand{\pfxybar}{p_f(x,\ybar)}
\newcommand{\pfxy}{\bar{q}}
\newcommand{\pgxbary}{p_g(\xbar,y)}
\newcommand{\pgybar}{(p,q)}
\newcommand{\fxybar}{f(x,\ybar)}
\newcommand{\fxy}{f(x,y)}
\newcommand{\gxbary}{g(\xbar,y)}
\newcommand{\gxbarybar}{g(\xbar,\ybar)}
\newcommand{\gybar}{g(\ybar)}
\newcommand{\gammagxbary}{\gamma_g(\xbar,y)}
\newcommand{\gammagxbarybar}{\gamma_g(\xbar,\ybar)}
\newcommand{\gammagybar}{\gamma_g(\ybar)}
\newcommand{\yprime}{y^\prime}
\newcommand{\xprime}{x^\prime}
\newcommand{\tkpone}{t_{k+1}}
\newcommand{\ygentry}[1]{y^{(#1)}_g}
\newcommand{\ygi}{\ygentry{i}}
\newcommand{\cgentry}[1]{c^{(#1)}_g}
\newcommand{\cgi}{\cgentry{i}}
\newcommand{\zgentry}[1]{z^{(#1)}_g}
\newcommand{\Lrho}{\mathcal{L}_\rho}

\newcommand{\Keywords}[1]{\par\noindent
{\small{\em Keywords\/}: #1}}

\begin{abstract}
We consider a class of sparse learning problems in high dimensional feature space regularized by a structured sparsity-inducing norm that incorporates prior knowledge of the group structure of the features.  Such problems often pose a considerable challenge to optimization algorithms due to the non-smoothness and non-separability of the regularization term.  In this paper, we focus on two commonly adopted sparsity-inducing regularization terms, the overlapping Group Lasso penalty $l_1/l_2$-norm and the $l_1/l_\infty$-norm.  We propose a unified framework based on the augmented Lagrangian method, under which problems with both types of regularization and their variants can be efficiently solved.  As one of the core building-blocks of this framework, we develop new algorithms using a partial-linearization/splitting technique and prove that the accelerated versions of these algorithms require $O(\frac{1}{\sqrt{\epsilon}})$ iterations to obtain an $\epsilon$-optimal solution.  We compare the performance of these algorithms against that of the alternating direction augmented Lagrangian and FISTA methods on a collection of data sets and apply them to two real-world problems to compare the relative merits of the two norms.
\\
\Keywords{structured sparsity, overlapping Group Lasso, alternating direction methods, variable splitting, augmented Lagrangian}
\end{abstract}

\section{Introduction}\label{sec:intro}
%\subsection{Background}
For feature learning problems in a high-dimensional space, sparsity in the feature vector is usually a desirable property.  Many statistical models have been proposed in the literature to enforce sparsity, dating back to the classical Lasso model ($l_1$-regularization) \cite{tib, chen1999atomic}.  The Lasso model is particularly appealing because it can be solved by very efficient proximal gradient methods; e.g., see \cite{combettes2009proximal}. However, the Lasso does not take into account the structure of the features \cite{zou2005regularization}.  In many real applications, the features in a learning problem are often highly correlated, exhibiting a group structure.  Structured sparsity has been shown to be effective in those cases.  The Group Lasso model \cite{yuanlin,bach2008consistency,roth2008group} assumes disjoint groups and enforces sparsity on the pre-defined groups of features.  This model has been extended to allow for groups that are hierarchical as well as overlapping \cite{jenatton2009structured, kim2009tree, bach2010structured} with a wide array of applications from gene selection \cite{kim2009tree} to computer vision \cite{huang2009learning, jenatton2009spca}.  For image denoising problems, extensions with non-integer block sizes and adaptive partitions have been proposed in \cite{peyre2011group, peyre2011adaptive}.  In this paper, we consider the following basic model of minimizing the squared-error loss with a regularization term to induce group sparsity:
\begin{equation}\label{eq:overlap_glasso}
    \min_{x\in \mathbb{R}^m} L(x) + \Omega(x),
\end{equation}
where
\begin{eqnarray}
% \nonumber to remove numbering (before each equation)
  \nonumber L(x) &=& \frac{1}{2}\|Ax-b\|^2, \quad\quad A \in \mathbb{R}^{n\times m},\\
   \Omega(x) &=& \left\{
                            \begin{array}{ll}
                              \Omega_{l_1/l_2}(x) \equiv \lambda\sum_{s\in \mathcal{S}}w_s\|x_s\|, & \hbox{or} \\
                              \Omega_{l_1/l_\infty}(x) \equiv \lambda\sum_{s\in \mathcal{S}}w_s\|x_s\|_\infty & \hbox{,}
                            \end{array}
                          \right. \label{eq:Omega_def}
\end{eqnarray}
$\mathcal{S}=\{s_1,\cdots,s_{|\mathcal{S}|}\}$ is the set of group indices with $|\mathcal{S}| = J$, and the elements (features) in the groups possibly overlap \cite{chen2010efficient, obozinski2010network, jenatton2009structured, bach2010structured}.  In this model, $\lambda, w_s, \mathcal{S}$ are all pre-defined.  $\|\cdot\|$ without a subscript denotes the $l_2$-norm.  We note that the penalty term $\Omega_{l_1/l_2}(x)$ in \eqref{eq:Omega_def} is different from the one proposed in \cite{jacob2009group}, although both are called overlapping Group Lasso penalties.  In particular, \eqref{eq:overlap_glasso}-\eqref{eq:Omega_def} cannot be cast into a non-overlapping group lasso problem as done in \cite{jacob2009group}.  %For the case that we consider, \cite{jenatton2009structured} has shown that the set of allowed sparsity patterns are intersections of complements of groups $\mathcal{S}$ with strictly positive weights $w_s$, whereas the penalty term considered in \cite{jacob2009group} typically leads to a union of groups.

\subsection{Related Work}
%Most of the efficient methods that have been proposed for solving problem \ref{eq:overlap_glasso}-\ref{eq:Omega_def} are proximal gradient methods \cite{combettes2009proximal}. These methods take advantage of one's ability to efficiently compute the proximal operator associated with $\Omega(x)$.  For $\Omega_{l_1/l_2}(x)$, so far there is no closed-form expression for its proximal operator available.
Two proximal gradient methods have been proposed to solve a close variant of \eqref{eq:overlap_glasso} with an $l_1/l_2$ penalty,
\begin{equation}\label{eq:sparse_ogl}
    \min_{x\in \mathbb{R}^m} L(x) + \Omega_{l_1/l_2}(x) + \lambda\|x\|_1,
\end{equation} %\lambda\sum_{s\in \mathcal{S}}w_s\|x_s\|
which has an additional $l_1$-regularization term on $x$.  Chen et al. \cite{chen2010efficient} replace $\Omega_{l_1/l_2}(x)$ with a smooth approximation $\Omega_\eta(x)$ by using Nesterov's smoothing technique \cite{nesterov2005smooth} and solve the resulting problem by the Fast Iterative Shrinkage Thresholding algorithm (FISTA) \cite{beckteboulle}.  The parameter $\eta$ is a smoothing parameter, upon which the practical and theoretical convergence speed of the algorithm critically depends.  Liu and Ye \cite{liu2010fast} also apply FISTA to solve \eqref{eq:sparse_ogl}, but in each iteration, they transform the computation of the proximal operator associated with the combined penalty term into an equivalent constrained smooth problem and solve it by Nesterov's accelerated gradient descent method \cite{nesterov2005smooth}.  Mairal et al. \cite{obozinski2010network} apply the accelerated proximal gradient method to \eqref{eq:overlap_glasso} with $l_1/l_\infty$ penalty and propose a network flow algorithm to solve the proximal problem associated with $\Omega_{l_1/l_\infty}(x)$.  Mosci et al.'s method \cite{mosci2010primal} for solving the Group Lasso problem in \cite{jacob2009group} is in the same spirit as \cite{liu2010fast}, but their approach uses a projected Newton method.

\subsection{Our Contributions}
We take a unified approach to tackle problem \eqref{eq:overlap_glasso} with both $l_1/l_2$- and $l_1/l_\infty$-regularizations.  Our strategy is to develop efficient algorithms based on the Alternating Linearization Method with Skipping (ALM-S) \cite{goldfarb2009falm} and FISTA for solving an equivalent constrained version of problem \eqref{eq:overlap_glasso} (to be introduced in Section \ref{sec:reformulation}) in an augmented Lagrangian method framework.  Specifically, we make the following contributions in this paper:
\begin{itemize}
  \item We build a general framework based on the augmented Lagrangian method, under which learning problems with both $l_1/l_2$ and $l_1/l_\infty$ regularizations (and their variants) can be solved.  This framework allows for experimentation with its key building blocks.
  \item We propose new algorithms: ALM-S with partial splitting (APLM-S) and FISTA with partial linearization (FISTA-p), to serve as the key building block for this framework.  We prove that APLM-S and FISTA-p have convergence rates of $O(\frac{1}{k})$ and $O(\frac{1}{k^2})$ respectively, where $k$ is the number of iterations.  Our algorithms are easy to implement and tune, and they do not require line-search, eliminating the need to evaluate the objective function at every iteration.
%In terms of computational performance, the accelerated version (FISTA-p) compares favorably to the state-of-the-art proximal gradient algorithms on both $l_1/l_2$ and $l_1/l_\infty$ models.%Our algorithms are easy to implement and tune, and when applied under our learning framework, they do an  excellent job of `load-balancing', in the sense that the difficulty of solving the subproblems that need to be solved is distributed among them fairly equally.
%  \item We provide insight on the underlying relationship among a family of alternating direction methods.
  \item We evaluate the quality and speed of the proposed algorithms and framework against state-of-the-art approaches on a rich set of synthetic test data and compare the $l_1/l_2$ and $l_1/l_\infty$ models on breast cancer gene expression data \cite{van2002gene} and a video sequence background subtraction task \cite{obozinski2010network}.
\end{itemize}

%We propose a variant of the Alternating Linearization Method with skipping (ALM-S) \cite{goldfarb2009falm} and the Fast Iterative Shrinkage Thresholding algorithm (FISTA) \cite{beckteboulle} to solve a constrained version of problem \eqref{eq:overlap_glasso} (to be introduced in Section \ref{sec:reformulation}) in an augmented Lagrangian framework.  Our method is based on the partial-linearization of the decision vector, and we show that it is considerably more efficient than its full-linearization counterpart for solving this problem.  We also examine the elastic-net penalty \cite{zou2005regularization} in one interesting example (background subtraction in a video sequence) of the problem \eqref{eq:overlap_glasso} and again demonstrate the power of partial-linearization.

\section{A Variable-Splitting Augmented Lagrangian Framework}\label{sec:reformulation}
In this section, we present a unified framework, based on variable splitting and the augmented Lagrangian method for solving \eqref{eq:overlap_glasso} with both $l_1/l_2$- and $l_1/l_\infty$-regularizations.  This framework reformulates problem \eqref{eq:overlap_glasso} as an equivalent linearly-constrained problem, by using the following variable-splitting procedure.

%\subsection{Variable Splitting}\label{sec:var_split}
%Our reformulation is based on the variable splitting technique \cite{afonso2009augmented, wang2008new, courant1994variational} for minimizing composite objective functions, i.e.
%\begin{equation}\label{eq:composite_obj}
%    \min_x \quad f(x) + g(Cx).
%\end{equation}
%By introducing an auxiliary variable $y$, which is related to the original variable $x$ through the linear constraint $Cx = y$, problem \eqref{eq:composite_obj} can transformed into an equivalent constrained problem
%\begin{eqnarray}\label{eq:composite_obj_constr}
%% \nonumber to remove numbering (before each equation)
%  \min_{x,y} && f(x) + g(y) \\
%  \nonumber s.t. && Cx = y.
%\end{eqnarray}
%The two component functions in the objective function are decoupled, and the resulting problem \eqref{eq:composite_obj_constr}, if solved within an augmented Lagrangian framework, is often easy to solve with respect to either $x$ and $y$, facilitating an alternating style minimization algorithm.

%\subsection{Problem Reformulation}\label{sec:reformulation}
Let $y \in \mathbb{R}^{\sum_{s\in\mathcal{S}}|s|}$ be the vector obtained from the vector $x \in \mathbb{R}^m$ by repeating components of $x$ so that no component of $y$ belongs to more than one group.  Let $M = \sum_{s\in\mathcal{S}}|s|$.  The relationship between $x$ and $y$ is specified by the linear constraint $Cx = y$, where the $(i,j)$-th element of the matrix $C \in \mathbb{R}^{M\times m}$ is
\begin{equation}\label{eq:mat_C}
    C_{i,j} = \left\{
                \begin{array}{ll}
                  1, & \hbox{if $y_i$ is a replicate of $x_j$,} \\
                  0, & \hbox{otherwise.}
                \end{array}
              \right.
\end{equation}
For examples of $C$, refer to \cite{chen2010efficient}.  Consequently, \eqref{eq:overlap_glasso} is equivalent to
\begin{eqnarray}\label{eq:overlap_glasso_aug1}
   % \nonumber to remove numbering (before each equation)
     \min && F_{obj}(x,y) \equiv \frac{1}{2}\|Ax-b\|^2 + \tilde{\Omega}(y) \\
     \nonumber s.t. && Cx = y,
\end{eqnarray}
where $\tilde{\Omega}(y)$ is the non-overlapping group-structured penalty term corresponding to $\Omega(y)$ defined in \eqref{eq:Omega_def}.
%:
%\begin{equation*}
%    \tilde{\Omega}(y) = \left\{
%                          \begin{array}{ll}
%                            \lambda\sum_{g\in \tilde{\mathcal{G}}}w_g\|y_g\|, & \hbox{or} \\
%                            \lambda\sum_{g\in \tilde{\mathcal{G}}}w_g\|y_g\|_\infty & \hbox{.}
%                          \end{array}
%                        \right.
%\end{equation*}
Note that $C$ is a highly sparse matrix, and $D = C^TC$ is a diagonal matrix with the diagonal entries equal to the number of times that each entry of $x$ is included in some group.  Problem \eqref{eq:overlap_glasso_aug1} now includes two sets of variables $x$ and $y$, where $x$ appears only in the loss term $L(x)$ and $y$ appears only in the penalty term $\tilde{\Omega}(y)$.

All the non-overlapping versions of $\Omega(\cdot)$, including the Lasso and Group Lasso, are special cases of $\Omega(\cdot)$, with $C = I$, i.e. $x =y$.  Hence, \eqref{eq:overlap_glasso_aug1} in this case is equivalent to applying variable-splitting on $x$.  Problems with a composite penalty term, such as the Elastic Net, $\lambda_1 \|x\|_1 + \lambda_2 \|x\|^2$, can also be reformulated in a similar way by merging the smooth part of the penalty term ($\lambda_2\|x\|^2$ in the case of the Elastic Net) with the loss function $L(x)$.

%\subsection{The General Framework}
To solve \eqref{eq:overlap_glasso_aug1}, we apply the augmented Lagrangian method \cite{hestenes1969multiplier,powell1972nonlinear,nocedal,bertsekas1999nonlinear} to it.  This method, Algorithm \ref{alg:aug_lag_exact}, minimizes
the augmented Lagrangian
\begin{equation}\label{eq:aug_lag}
    \mathcal{L}(x,y,v) = \frac{1}{2}\|Ax-b\|^2 - v^T(Cx - y) + \frac{1}{2\mu}\|Cx-y\|^2 + \tilde{\Omega}(y)
\end{equation}
exactly for a given Lagrange multiplier $v$ in every iteration followed by an update to $v$.  The parameter $\mu$ in \eqref{eq:aug_lag} controls the amount of weight that is placed on violations of the constraint $Cx = y$.
Algorithm \ref{alg:aug_lag_exact} can also be viewed as a dual ascent algorithm applied to $P(v) = \min_{x,y}\mathcal{L}(x,y,v)$ \cite{bertsekas1976multiplier}, where $v$ is the dual variable, $\frac{1}{\mu}$ is the step-length, and $Cx-y$ is the gradient $\nabla_v P(v)$.
\begin{algorithm}
\caption{AugLag}
\begin{algorithmic}[1]\label{alg:aug_lag_exact}
\STATE Choose $x^0,y^0,v^0$.
\FOR{$l = 0,1,\cdots$}
    \STATE $(\xlpone, \ylpone) \gets \arg\min_{x,y} \mathcal{L}(x,y,v^l)$ \label{line:solve_auglag_exact}
    \STATE $v^{l+1} \gets v^l - \frac{1}{\mu}(C\xlpone - \ylpone)$
    \STATE Update $\mu$ according to the chosen updating scheme.
\ENDFOR
\end{algorithmic}
\end{algorithm}
This algorithm does not require $\mu$ to be very small to guarantee convergence to the solution of problem \eqref{eq:overlap_glasso_aug1} \cite{nocedal}.  However, solving the problem in Line \ref{line:solve_auglag_exact} of Algorithm \ref{alg:aug_lag_exact} exactly can be very challenging in the case of structured sparsity.  We instead seek an approximate minimizer of the augmented Lagrangian via the abstract subroutine ApproxAugLagMin$(x,y,v)$. The following theorem \cite{rockafellar1973multiplier} guarantees the convergence of this inexact version of Algorithm \ref{alg:aug_lag_exact}.
  
\begin{thm}\label{thm:ial_conv}
Let $\alpha^l := \mathcal{L}(x^l,y^l,v^l) - \inf_{x\in\mathbb{R}^m,y\in\mathbb{R}^M}\mathcal{L}(x,y,v^l)$ and $F^*$ be the optimal value of problem \eqref{eq:overlap_glasso_aug1}.  Suppose problem \eqref{eq:overlap_glasso_aug1} satisfies the modified Slater's condition, and
\begin{equation}\label{eq:sub_acc_cond}
    \sum_{l=1}^\infty \sqrt{\alpha^l} < +\infty.
\end{equation}
Then, the sequence $\{v^l\}$ converges to $v^*$, which satisfies
\begin{equation*}
    \inf_{x\in\mathbb{R}^m,y\in\mathbb{R}^M}\big(F_{obj}(x,y) - (v^*)^T(Cx-y)\big) = F^*,
\end{equation*}
while the sequence $\{x^l,y^l\}$ satisfies $\lim_{l\rightarrow\infty} Cx^l-y^l = 0$ and $\lim_{l\rightarrow\infty} F_{obj}(x^l,y^l) = F^*$.
\end{thm}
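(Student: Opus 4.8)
The plan is to reduce the statement to Rockafellar's convergence theory for the \emph{inexact proximal point algorithm} by exploiting the classical equivalence between the augmented Lagrangian method and the proximal point algorithm applied to the dual. The one genuinely new ingredient needed is an "error-translation" step that converts the function-value accuracy hypothesis \eqref{eq:sub_acc_cond} into the distance-type stopping criterion under which that theory applies.

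First I would set up the dual picture. Introduce the residual $r := Cx - y$ and the value function $h(r) := \inf\{F_{obj}(x,y) : Cx - y = r\}$, which is convex, closed, and proper (this is where mild regularity, implied by the modified Slater condition, is used) and satisfies $h(0) = F^*$. A short computation gives $\min_{x,y}\mathcal{L}(x,y,v) = \min_r\{h(r) - v^Tr + \frac{1}{2\mu}\|r\|^2\}$, and shows that the \emph{exact} version of Line~\ref{line:solve_auglag_exact} followed by the multiplier update is exactly one step of the proximal point iteration $v^{l+1} = (I + \mu\,\partial h^*)^{-1}(v^l)$ on the convex function $h^*$. The minimizers of $h^*$ are precisely the maximizers of the ordinary dual $d(v) := \inf_{x,y}\big(F_{obj}(x,y) - v^T(Cx-y)\big)$, and the modified Slater condition guarantees that this set is nonempty and that there is no duality gap, so $\max_v d(v) = F^* = h(0)$. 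This is exactly the hypothesis $T^{-1}(0)\neq\emptyset$ (with $T=\partial h^*$) required by the proximal point convergence theory.

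Next I would prove the error-translation lemma, which is the crux. Since $\mathcal{L}(\cdot,\cdot,v)$ depends on $(x,y)$ only through $r = Cx - y$, an $\alpha^l$-suboptimal pair $(x^{l+1},y^{l+1})$ for $\mathcal{L}(\cdot,\cdot,v^l)$ yields an $\alpha^l$-suboptimal minimizer $r^{l+1}$ of $\psi_{v^l}(r) := h(r) - (v^l)^Tr + \frac{1}{2\mu}\|r\|^2$, and $\psi_{v^l}$ is $\frac{1}{\mu}$-strongly convex in $r$; hence $\|r^{l+1} - \hat r^{l+1}\|^2 \le 2\mu\alpha^l$, where $\hat r^{l+1}$ is the exact minimizer. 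Because $v^{l+1} = v^l - \frac{1}{\mu}r^{l+1}$ and the exact proximal point is $\hat v^{l+1} = v^l - \frac{1}{\mu}\hat r^{l+1}$, this gives $\|v^{l+1} - \hat v^{l+1}\| \le \sqrt{2\alpha^l/\mu}$, so by \eqref{eq:sub_acc_cond} (and with $\mu_l$ bounded below by a positive constant, as in the admissible updating schemes) $\sum_l \|v^{l+1} - \hat v^{l+1}\| < \infty$. This is Rockafellar's accuracy criterion for the inexact proximal point algorithm, so I may invoke the theorem of \cite{rockafellar1973multiplier} to conclude that $\{v^l\}$ converges to some $v^*\in\arg\max_v d(v)$; together with the no-gap statement this is precisely $\inf_{x,y}\big(F_{obj}(x,y) - (v^*)^T(Cx-y)\big) = d(v^*) = F^*$, the first displayed conclusion. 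The primal statements then follow: $Cx^l - y^l = \mu_{l-1}(v^{l-1} - v^l) \to 0$ since $\{v^l\}$ is Cauchy and $\mu_l$ is bounded, and writing $F_{obj}(x^l,y^l) = \mathcal{L}(x^l,y^l,v^{l-1}) + (v^{l-1})^T(Cx^l - y^l) - \frac{1}{2\mu_{l-1}}\|Cx^l-y^l\|^2$ and using that $\|Cx^l-y^l\|\to 0$, that $\{v^{l-1}\}$ is bounded, that $\alpha^{l-1}\to 0$, and that the (concave, hence continuous) augmented dual value $\min_r\psi_v(r)$ attains its maximum $F^*$ and is evaluated along the convergent sequence $v^{l-1}$, one gets $F_{obj}(x^l,y^l)\to F^*$.

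The hard part is the error-translation lemma: one must notice that, although $\mathcal{L}(\cdot,\cdot,v)$ need not be strongly convex in $(x,y)$ jointly, it is strongly convex as a function of the residual $r=Cx-y$ — which is all the multiplier update depends on — and that this curvature is exactly what converts an objective-value gap $\alpha^l$ into an $O(\sqrt{\alpha^l})$ bound on the deviation from the exact proximal step. Once this is in place, the rest is a direct appeal to Rockafellar's proximal point convergence theorem plus routine continuity and boundedness bookkeeping.
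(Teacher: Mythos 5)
The paper does not actually prove Theorem \ref{thm:ial_conv}; it imports it wholesale from \cite{rockafellar1973multiplier}, so there is no in-paper argument to compare against. Your proposal is a correct, self-contained reconstruction of the standard argument underlying that citation: the reduction of the exact outer iteration to a resolvent step on $\partial h^*=\partial(-d)$ for the value function $h(r)=\inf\{F_{obj}(x,y):Cx-y=r\}$, the use of the modified Slater condition to get zero duality gap and a nonempty dual solution set (the hypothesis $T^{-1}(0)\neq\emptyset$), and --- the genuinely non-routine step, which you identify correctly --- the observation that $\mathcal{L}(\cdot,\cdot,v)$ is $\tfrac{1}{\mu}$-strongly convex \emph{as a function of the residual} $r=Cx-y$, so that an objective gap of $\alpha^l$ yields $\|v^{l+1}-\hat v^{l+1}\|\le\sqrt{2\alpha^l/\mu}$ and hence the summable-error criterion of the inexact proximal point theory under \eqref{eq:sub_acc_cond}. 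The recovery of the primal claims from $Cx^{l+1}-y^{l+1}=\mu(v^l-v^{l+1})$ and from continuity of $v\mapsto\inf_{x,y}\mathcal{L}(x,y,v)$ at the limit $v^*$ (where that infimum equals $F^*$ because $v^*\in\partial h(0)$) is also sound. Three small points to tighten: the resolvent should be $(I+\tfrac{1}{\mu}\partial h^*)^{-1}$ rather than $(I+\mu\,\partial h^*)^{-1}$ given that the penalty is $\tfrac{1}{2\mu}\|\cdot\|^2$ and the step length is $\tfrac{1}{\mu}$ (this does not affect the argument); the lower semicontinuity and properness of $h$, and existence of the exact minimizer $\hat r^{l+1}$, deserve a sentence since $h$ is an infimal projection; and since Algorithm \ref{alg:aug_lag} allows $\mu$ to vary across outer iterations, you should state explicitly that the argument uses $0<\mu_{min}\le\mu^l\le\mu_{max}$, which the paper's updating schemes do enforce. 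None of these is a gap in the logic.
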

The condition \eqref{eq:sub_acc_cond} requires the augmented Lagrangian subproblem be solved with increasing accuracy.  We formally state this framework in Algorithm \ref{alg:aug_lag}.
\begin{algorithm}
\caption{OGLasso-AugLag}
\begin{algorithmic}[1]\label{alg:aug_lag}
\STATE Choose $x^0,y^0,v^0$.
\FOR{$l = 0,1,\cdots$}
    \STATE $(\xlpone, \ylpone) \gets$ ApproxAugLagMin$(x^l, y^l, v^l )$, to compute an approximate minimizer of $\mathcal{L}(x,y,v^l)$ \label{line:solve_auglag}
    \STATE $v^{l+1} \gets v^l - \frac{1}{\mu}(C\xlpone - \ylpone)$
    \STATE Update $\mu$ according to the chosen updating scheme.
\ENDFOR
\end{algorithmic}
\end{algorithm}
We index the iterations of Algorithm \ref{alg:aug_lag} by $l$ and call them `outer iterations'.  In Sections \ref{sec:partial_linearization}, we develop algorithms that implement ApproxAugLagMin$(x,y,v)$.  The iterations of these subroutine are indexed by $k$ and are called `inner iterations'.

\section{Methods for Approximately Minimizing the Augmented Lagrangian}\label{sec:partial_linearization}
In this section, we use the overlapping Group Lasso penalty $\Omega(x) = \lambda\sum_{s\in \mathcal{S}}w_s\|x_s\|$ to illustrate the optimization algorithms under discussion.  The case of $l_1/l_\infty$-regularization will be discussed in Section \ref{sec:L1infty}.  From now on, we assume without loss of generality that $w_s = 1$ for every group $s$.

\subsection{Alternating Direction Augmented Lagrangian (ADAL) Method}\label{sec:ADAL}
The well-known Alternating Direction Augmented Lagrangian (ADAL) method \cite{eckstein1992douglas, gabay1976dual, glowinski1975adal, boyd2010distributed}\footnote{Recently, Mairal et al. \cite{mairal2011network} also applied ADAL with two variants based on variable-splitting to the overlapping Group Lasso problem.} approximately minimizes the augmented Lagrangian by minimizing \eqref{eq:aug_lag} with respect to $x$ and $y$ alternatingly and then updates the Lagrange multiplier $v$ on each iteration (e.g., see \cite{bertsekas1989parallel}, Section 3.4).  Specifically, the single-iteration procedure that serves as the procedure ApproxAugLagMin$(x,y,v)$ is given below as Algorithm \ref{alg:adal}.
 %provides a very simple approach for solving \eqref{eq:overlap_glasso_aug1} in Line \ref{line:solve_auglag} of Algorithm \ref{alg:aug_lag}, by alternatingly optimizing w.r.t. $x$ and $y$ once.  Specifically, we have

%\begin{eqnarray}
%% \nonumber to remove numbering (before each equation)
%  \arg\min_x\mathcal{L}(x,y,v) &\equiv& \arg\min_x\left\{\frac{1}{2}\|Ax-b\|^2 - v^TCx + \frac{1}{2\mu}\|Cx-y\|^2 \right\}, \label{eq:adal_x_prob}\\
%  \arg\min_y\mathcal{L}(x,y,v) &\equiv& \arg\min_y\left\{ v^Ty + \frac{1}{2\mu}\|Cx-y\|^2 + \tilde{\Omega}(y)\right\} \\
%                &\equiv& \arg\min_y\left\{ \frac{1}{2\mu}\|Cx - \mu v - y\|^2 + \tilde{\Omega}(y) \right\}. \label{eq:adal_y_prob}
%\end{eqnarray}

\begin{algorithm}
\caption{ADAL}
\begin{algorithmic}[1]\label{alg:adal}
\STATE Given $x^l$, $y^l$, and $v^l$.
%\FOR{$k = 0,1,\cdots$}
    \STATE $\xlpone \gets \arg\min_x \mathcal{L}(x,y^l,v^l)$ \label{line:adal_x}
    \STATE $\ylpone \gets \arg\min_y \mathcal{L}(\xlpone,y,v^l)$ \label{line:adal_y}
%    \STATE $v^{k+1} \gets v^k - \frac{1}{\mu}(C\xkpone - \ykpone)$
%\ENDFOR
\RETURN $\xlpone, \ylpone$.
\end{algorithmic}
\end{algorithm}

The ADAL method, also known as the alternating direction method of multipliers (ADMM) and the split Bregman method, has recently been applied to problems in signal and image processing \cite{combettes2009proximal, afonso2009augmented, goldstein2009split} and low-rank matrix recovery \cite{lin2010augmented}.  Its convergence has been established in \cite{eckstein1992douglas}.  This method can accommodate a sum of more than two functions.  For example, by applying variable-splitting (e.g., see \cite{bertsekas1989parallel, boyd2010distributed}) to the problem
$\min_x f(x) + \sum_{i=1}^K g_i(C_i x)$,
it can be transformed into
\begin{eqnarray*}
% \nonumber to remove numbering (before each equation)
  \min_{x,y_1,\cdots,y_K} && f(x) + \sum_{i=1}^K g_i(y_i) \\
  s.t. && y_i = C_i x, \quad i = 1,\cdots,K.
\end{eqnarray*}
The subproblems corresponding to $y_i$'s can thus be solved simultaneously by the ADAL method.  This so-called simultaneous direction method of multipliers (SDMM) \cite{setzer2010deblurring} is related to Spingarn's method of partial inverses \cite{spingarn1983partial} and has been shown to be a special instance of a more general parallel proximal algorithm with inertia parameters \cite{pesquet2010parallel}.

Note that the problem solved in Line \ref{line:adal_y} of Algorithm \ref{alg:adal},
\begin{equation}\label{eq:adal_y_prob}
    \ylpone = \arg\min_y\mathcal{L}(\xlpone,y,v^l) \equiv \arg\min_y\left\{ \frac{1}{2\mu}\|d^l - y\|^2 + \tilde{\Omega}(y) \right\},
\end{equation}
where $d^l = C\xlpone - \mu v^l$, is group-separable and hence can be solved in parallel.  As in \cite{qin2010efficient}, each subproblem can be solved by applying the block soft-thresholding operator, $T(d^l_s,\mu\lambda) \equiv \frac{d^l_s}{\|d^l_s\|}\max(0, \|d^l_s\| - \lambda\mu), s = 1,\cdots,J$.  Solving for $\xlpone$ in Line \ref{line:adal_x} of Algorithm \ref{alg:adal}, i.e.
\begin{equation}\label{eq:adal_x_prob}
    \xlpone = \arg\min_x\mathcal{L}(x,y^l,v^l) \equiv \arg\min_x\left\{\frac{1}{2}\|Ax-b\|^2 - (v^l)^TCx + \frac{1}{2\mu}\|Cx-y^l\|^2 \right\},
\end{equation}
involves solving the linear system
\begin{equation}\label{eq:Lx_lin_sys}
    (A^TA + \frac{1}{\mu}D)x = A^Tb + C^Tv^l + \frac{1}{\mu}C^Ty^l,
\end{equation}
where the matrix on the left hand side of \eqref{eq:Lx_lin_sys} has dimension $m\times m$.  Many real-world data sets, such as gene expression data, are highly under-determined.  Hence, the number of features ($m$) is much larger than the number of samples ($n$).  In such cases, one can use the Sherman-Morrison-Woodbury formula,
\begin{equation}\label{eq:sherman_morrison}
    (A^TA + \frac{1}{\mu}D)^{-1} = \mu D^{-1} - \mu^2D^{-1}A^T(I + \mu AD^{-1}A^T)^{-1}AD^{-1},
\end{equation}
and solve instead an $n\times n$ linear system involving the matrix $I + \mu AD^{-1}A^T$.  In addition, as long as $\mu$ stays the same, one has to factorize $A^TA + \frac{1}{\mu}D$ or $I + \mu AD^{-1}A^T$ only once and store their factors for subsequent iterations.

When both $n$ and $m$ are very large, it might be infeasible to compute or store $A^TA$, not to mention its eigen-decomposition, or the Cholesky decomposition of $A^TA + \frac{1}{\mu}D$.  In this case, one can solve the linear systems using the preconditioned Conjugate Gradient (PCG) method \cite{golub1996matrix}. Similar comments apply to the other algorithms proposed in Sections \ref{sec:alms_partial2} - \ref{sec:fistap} below.%one may have to resort to sampling-based methods to approximate $A^TA$ or iterative methods, such as pre-conditioned Conjugate Gradient (PCG) \cite{golub1996matrix} to solve the linear systems.
Alternatively, we can apply FISTA to Line \ref{line:solve_auglag} in Algorithm \ref{alg:aug_lag} (see Section \ref{sec:fista_full}).

\subsection{ALM-S: partial split (APLM-S)}\label{sec:alms_partial2}
We now consider applying the Alternating Linearization Method with Skipping (ALM-S) from \cite{goldfarb2009falm} to approximately minimize \eqref{eq:aug_lag}.
%problem \eqref{eq:overlap_glasso_aug1}.  To do this, one must deal with the linear constraints.  One approach is to replace \eqref{eq:overlap_glasso_aug1} by its unconstrained penalized version
%\begin{equation}\label{eq:overlap_glasso_penalized}
%    \min_{x,y} \frac{1}{2}\|Ax-b\|^2 + \rho\|Cx-y\|^2 + \tilde{\Omega}(y),
%\end{equation}
%and then apply ALM-S to \eqref{eq:overlap_glasso_penalized}.  However, if we require low infeasibility with respect to the constraints $Cx=y$, we may have to choose a very large penalty $\rho$, which could make the problem ill-conditioned.  Here, we instead enforce the constraint through an augmented Lagrangian approach as in Algorithm \ref{alg:aug_lag} and
In particular, we apply variable splitting (Section \ref{sec:reformulation}) to the variable $y$, to which the group-sparse regularizer $\tilde{\Omega}$ is applied, (the original ALM-S splits both variables $x$ and $y$,)  and re-formulate \eqref{eq:aug_lag} as follows.
\begin{eqnarray}
    \min_{x,y,\ybar} && \frac{1}{2}\|Ax-b\|^2 - v^T(Cx-y) + \frac{1}{2\mu}\|Cx-y\|^2 + \tilde{\Omega}(\ybar) \label{eq:aug_lag_split} \\
    \nonumber s.t. && y = \ybar.
\end{eqnarray}
Note that the Lagrange multiplier $v$ is fixed here.
Defining
\begin{eqnarray}
% \nonumber to remove numbering (before each equation)
  f(x,y) &:=& \frac{1}{2}\|Ax-b\|^2 - v^T(Cx-y) + \frac{1}{2\mu}\|Cx-y\|^2, \label{eq:f_x_ybar}\\
  g(y) &=& \tilde{\Omega}(y) = \glassonormy, \label{eq:g_y}
\end{eqnarray}
problem \eqref{eq:aug_lag_split} is of the form
\begin{eqnarray}
% \nonumber to remove numbering (before each equation)
  \min && f(x,y) + g(\ybar) \label{eq:aug_lag_split_simplified}\\
  \nonumber s.t. && y = \ybar,
\end{eqnarray}
to which we now apply partial-linearization.

\subsubsection{Partial linearization and convergence rate analysis}
Let us define
\begin{eqnarray}
% \nonumber to remove numbering (before each equation)
  F(x,y) &:=& f(x,y) + g(y) = \mathcal{L}(x,y;v), \\
  \mathcal{L}_\rho (x,y,\ybar,\gamma) &:=& f(x,y) + \gybar + \gamma^T(\ybar-y) + \frac{1}{2\rho}\|\ybar-y\|^2, \label{eq:aug_lag_aug_lag_partial}
  %Q_f(\ybar;x,y) &:=& \fxy + \gradyfxy^T(\ybar-y) + \frac{1}{2\rho}\|\ybar-y\|^2 + \gybar, \\
  %Q_g(x,y;\ybar) &:=& \fxy + \gybar + \gammagybar^T(y-\ybar) + \frac{1}{2\rho}\|y-\ybar\|^2, \\
\end{eqnarray}
where $\gamma$ is the Lagrange multiplier in the augmented Lagrangian \eqref{eq:aug_lag_aug_lag_partial} corresponding to problem \eqref{eq:aug_lag_split_simplified}, and $\gamma_g(\ybar)$ is a sub-gradient of $g$ at $\ybar$.  We now present our partial-split alternating linearization algorithm to implement ApproxAugLagMin$(x,y,v)$ in Algorithm \ref{alg:aug_lag}.
\begin{algorithm}
\caption{APLM-S}
\begin{algorithmic}[1]\label{alg:alms_partial2}
\STATE Given $x^0, \ybar^0, v$.  Choose $\rho, \gamma^0$, such that $-\gamma^0 \in \partial g(\ybar^0)$.  Define $f(x,y)$ as in \eqref{eq:f_x_ybar}.
\FOR{$k = 0,1,\cdots$ until stopping criterion is satisfied}
    \STATE $(x^{k+1},y^{k+1}) \gets \arg\min_{x,y}\mathcal{L}_\rho(x,y,\ybar^k,\gamma^k)$. \label{line:almsp_xbarybar}
    \IF{$F(x^{k+1},y^{k+1}) > \mathcal{L}_\rho(x^{k+1},y^{k+1},\ybar^k,\gamma^k)$} \label{line:almsp_check}
        \STATE $y^{k+1} \gets \ybar^k$ \label{line:y_skip}
        \STATE $x^{k+1} \gets \arg\min_x f(x,\ykpone) \equiv\arg\min_x \mathcal{L}_\rho(x;y^{k+1},\ybar^k,\gamma^k)$ \label{line:x_skip}
    \ENDIF
    %\STATE $\xbar^{k+1} \gets x^{k+1}$
    \STATE $\ybar^{k+1} \gets p_f(\xkpone,\ykpone) \equiv \arg\min_{\ybar} \Lrho(\xkpone,\ykpone,\ybar,\nabla_y f(\xkpone,\ykpone))$ \label{line:almsp_ybar}
    \STATE $\gamma^{k+1} \gets \nabla_{y}f(x^{k+1},y^{k+1}) - \frac{y^{k+1} - \ybar^{k+1}}{\rho}$
\ENDFOR
\RETURN $(\xKpone, \ybarKpone)$
\end{algorithmic}
\end{algorithm}

We note that in Line \ref{line:x_skip} in Algorithm \ref{alg:alms_partial2},
\begin{equation}\label{eq:x_skip_opt}
    x^{k+1} = \arg\min_x \mathcal{L}_\rho(x;y^{k+1},\ybar^k,\gamma^k) \equiv \arg\min_x f(x;y^{k+1}) \equiv \arg\min_x f(x;\ybar^k).
\end{equation}
Now, we have a variant of Lemma 2.2 in \cite{goldfarb2009falm}.
\begin{lem}\label{lem:lem22x}
For any $(x,y)$, if $\pfxy := \arg\min_{\ybar}\Lrho(x,y,\ybar,\nabla_y f(x,y))$ and
\begin{equation}\label{eq:cond1_lem22x}
    F(x,\pfxy) \leq \Lrho(x,y,\pfxy,\nabla_y f(x,y)),
\end{equation}
then for any $(\xbar,\ybar)$,
\begin{equation}\label{eq:lem22A}
    2\rho(F(\xbar,\ybar) - F(x,\pfxy)) \geq \|\pfxy - \ybar\|^2 - \|y - \ybar\|^2 + 2\rho((\xbar - x)^T\gradxfxy).
\end{equation}
Similarly, for any $\ybar$, if $\pgybar := \arg\min_{x,y}\Lrho(x,y,\ybar,-\gamma_g(\ybar))$ and
\begin{equation}\label{eq:cond1_lem22ybar}
    F(\pgybar) \leq \Lrho(\pgybar,\ybar,-\gamma_g(\ybar)),
\end{equation}
then for any $(x,y)$,
\begin{equation}\label{eq:lem22B}
    2\rho(F(x,y) - F(\pgybar)) \geq \|\pgybar_y - y \|^2 - \|\ybar - y\|^2.
\end{equation}
\end{lem}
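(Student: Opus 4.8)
The plan is to derive each of the two inequalities as an alternating-linearization version of the classical ``three-point'' estimate behind the analysis of proximal-gradient and FISTA-type methods. Only two structural facts are needed: $f$ in \eqref{eq:f_x_ybar} is a differentiable convex quadratic in $(x,y)$ jointly, and $g$ in \eqref{eq:g_y} is convex.

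For the first inequality, observe that since $\bar q$ minimizes $\Lrho(x,y,\cdot,\gradyfxy)$ with $x$ and $y$ frozen, and the $\fxy$ term of $\Lrho$ is constant in that minimization, $\bar q$ minimizes $u \mapsto g(u) + \gradyfxy^T(u - y) + \frac{1}{2\rho}\|u - y\|^2$; its first-order optimality condition gives a subgradient $s \in \partial g(\bar q)$ with $s = -\gradyfxy - \frac{1}{\rho}(\bar q - y)$. Now fix $(\xbar,\ybar)$ and combine three ingredients: joint convexity of $f$ at $(x,y)$, $f(\xbar,\ybar) \ge \fxy + \gradxfxy^T(\xbar - x) + \gradyfxy^T(\ybar - y)$; convexity of $g$ at $\bar q$, $g(\ybar) \ge g(\bar q) + s^T(\ybar - \bar q)$; and the hypothesis \eqref{eq:cond1_lem22x}, $F(x,\bar q) \le \fxy + g(\bar q) + \gradyfxy^T(\bar q - y) + \frac{1}{2\rho}\|\bar q - y\|^2$. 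Adding the first two and subtracting the third, then substituting the expression for $s$, the three terms containing $\gradyfxy$ cancel, because their net coefficient is $(\ybar - y) - (\ybar - \bar q) - (\bar q - y) = 0$. What is left is $F(\xbar,\ybar) - F(x,\bar q) \ge \gradxfxy^T(\xbar - x) - \frac{1}{\rho}(\bar q - y)^T(\ybar - \bar q) - \frac{1}{2\rho}\|\bar q - y\|^2$; applying the elementary identity $2a^Tb + \|a\|^2 = \|a+b\|^2 - \|b\|^2$ with $a = \bar q - y$ and $b = \ybar - \bar q$ (so $a+b = \ybar - y$) turns the last two terms into $\frac{1}{2\rho}(\|\bar q - \ybar\|^2 - \|y - \ybar\|^2)$, and multiplying through by $2\rho$ gives \eqref{eq:lem22A}. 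The $x$-gradient term survives precisely because $x$ is not optimized in the step producing $\bar q$.

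For the second inequality, the minimization defining $(p,q)$ (whose $y$-component I write $q$) is over both $x$ and $y$, and the $g(\ybar)$ and $-\gammagybar^T\ybar$ parts of $\Lrho$ are constant in it. The $x$-part is $\min_x f(x,y)$, so $\nabla_x f(p,q) = 0$ and no residual $x$-gradient term arises; the $y$-part gives the optimality condition $\gammagybar = -\nabla_y f(p,q) - \frac{1}{\rho}(q - \ybar)$, where $\gammagybar \in \partial g(\ybar)$ is the subgradient fixed in the subproblem. Fixing $(x,y)$, I would combine joint convexity of $f$ at $(p,q)$ with $\nabla_x f(p,q) = 0$, giving $f(x,y) \ge f(p,q) + \nabla_y f(p,q)^T(y - q)$; convexity of $g$ at $\ybar$, giving $g(y) \ge \gybar + \gammagybar^T(y - \ybar)$; and \eqref{eq:cond1_lem22ybar}, giving $F(p,q) \le f(p,q) + \gybar + \gammagybar^T(q - \ybar) + \frac{1}{2\rho}\|q - \ybar\|^2$. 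Adding the first two and subtracting the third, the $\nabla_y f(p,q)$ terms again cancel after substituting the optimality expression for $\gammagybar$, leaving $F(x,y) - F(p,q) \ge -\frac{1}{\rho}(q - \ybar)^T(y - q) - \frac{1}{2\rho}\|q - \ybar\|^2$; the same identity with $a = q - \ybar$ and $b = y - q$ (so $a+b = y - \ybar$) yields \eqref{eq:lem22B}.

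The convexity inequalities and the square completion are routine; the step that needs the most care -- and the main source of sign errors -- is keeping track of which variables are held fixed in each prox step, since this determines both the form of the optimality condition (a fresh subgradient of $g$ at $\bar q$ versus the pre-chosen subgradient at $\ybar$) and which first-order term of $f$ survives into the final bound (the $x$-gradient in \eqref{eq:lem22A}, none in \eqref{eq:lem22B}). One should also note that the hypotheses \eqref{eq:cond1_lem22x} and \eqref{eq:cond1_lem22ybar} are exactly what allows the quadratic model $\Lrho$ to be replaced by the true objective $F$ at the new iterate, which is the role of the skipping step in Algorithm~\ref{alg:alms_partial2}; so when the lemma is applied one must verify (as that step guarantees) that these conditions hold.
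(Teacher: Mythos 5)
Your proposal is correct and follows essentially the same route as the paper's proof in Appendix~\ref{sec:pf_lemma}: bound $F$ at the prox output by $\Lrho$ via the descent conditions \eqref{eq:cond1_lem22x}/\eqref{eq:cond1_lem22ybar}, invoke convexity of $f$ and $g$ with the appropriate (sub)gradients, substitute the first-order optimality conditions of the prox subproblems, and complete the square. Your bookkeeping of which subgradient of $g$ is used (fresh at $\bar q$ versus the pre-chosen $\gamma_g(\ybar)$) and of why the $\nabla_x f$ term survives only in \eqref{eq:lem22A} matches the paper exactly.
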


\begin{proof}
See Appendix \ref{sec:pf_lemma}.
\end{proof}

Algorithm \ref{alg:alms_partial2} checks condition \eqref{eq:cond1_lem22ybar} at Line \ref{line:almsp_check} because the function $g$ is non-smooth and condition \eqref{eq:cond1_lem22ybar} may not hold no matter what the value of $\rho$ is.  When this condition is violated, a skipping step occurs in which the value of $y$ is set to the value of $\ybar$ in the previous iteration (Line \ref{line:y_skip}) and $\mathcal{L}_\rho$ re-minimized with respect to $x$ (Line \ref{line:x_skip}) to ensure convergence.  Let us define a \textit{regular iteration} of Algorithm \ref{alg:alms_partial2} to be an iteration where no skipping step occurs, i.e. Lines \ref{line:y_skip} and \ref{line:x_skip} are not executed.  Likewise, we define a \textit{skipping iteration} to be an iteration where a skipping step occurs.
Now, we are ready to state the iteration complexity result for APLM-S.

\begin{thm}\label{thm:alms_partial_complexity}
Assume that $\nabla_y f(x,y)$ is Lipschitz continuous with Lipschitz constant $L_y(f)$, i.e. for any $x$, $\|\nabla_y f(x,y) - \nabla_y f(x,z)\| \leq L_y(f)\|y-z\|$, for all $y$ and $z$.
For $\rho \leq \frac{1}{L_y(f)}$, the iterates $(x^k,\ybar^k)$ in Algorithm \ref{alg:alms_partial2} satisfy
\begin{equation}\label{eq:alms_partial_iter_complexity}
    F(x^k, \ybar^k) - F(x^*, y^*) \leq \frac{\|\ybar^0 - y^*\|^2}{2\rho(k+k_n)}, \quad \forall k,
\end{equation}
where $(x^*,y^*)$ is an optimal solution to \eqref{eq:aug_lag_split}, and $k_n$ is the number of regular iterations among the first $k$ iterations.
\end{thm}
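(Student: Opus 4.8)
The plan is to mimic the convergence analysis of alternating linearization / FISTA-type methods, with Lemma \ref{lem:lem22x} supplying the two basic per-iteration inequalities and with the skipping iterations handled separately. The first step is to check that the hypotheses of Lemma \ref{lem:lem22x} hold along the iterates. Since $f(\cdot,y)$ is a strongly convex quadratic (its $x$-Hessian is $A^TA+\tfrac{1}{\mu}D\succ 0$) and Lines \ref{line:almsp_xbarybar} and \ref{line:x_skip} each minimize $\mathcal{L}_\rho$ (equivalently $f(\cdot,\ykpone)$) exactly in $x$, the first-order condition gives $\gradxfxy=0$ at the iterates, so the cross term $2\rho(\xbar-x)^T\gradxfxy$ in \eqref{eq:lem22A} drops out whenever the base point is an iterate. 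The descent lemma applied to $f(x,\cdot)$, using that $\nabla_y f(x,\cdot)$ is $L_y(f)$-Lipschitz and $\rho\le 1/L_y(f)$, gives
\[
 F(x,\ybar)\le f(x,y)+\gradyfxy^T(\ybar-y)+\tfrac{1}{2\rho}\|\ybar-y\|^2+\gybar=\Lrho(x,y,\ybar,\gradyfxy)
\]
for all $x$ and $\ybar$, so condition \eqref{eq:cond1_lem22x} holds at every iteration with $\ybarkpone=p_f(\xkpone,\ykpone)$. Finally, the optimality condition of the $\ybar$-subproblem in Line \ref{line:almsp_ybar} yields $-\gamma^{k+1}\in\partial g(\ybar^{k+1})$, so the invariant $-\gamma^k\in\partial g(\ybar^k)$ is maintained, and in a regular iteration the failure of the test in Line \ref{line:almsp_check} is precisely condition \eqref{eq:cond1_lem22ybar} for $p_g(\ybar^k)=(\xkpone,\ykpone)$.

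Set $\phi_k:=F(x^k,\ybar^k)-F(x^*,y^*)$. I would next establish monotonicity $\phi_{k+1}\le\phi_k$: taking the comparison point $(\xbar,\ybar)=(\xkpone,\ykpone)$ in \eqref{eq:lem22A} gives $F(\xkpone,\ybarkpone)\le F(\xkpone,\ykpone)$, and $F(\xkpone,\ykpone)\le F(x^k,\ybar^k)$ follows from \eqref{eq:lem22B} with comparison point $(x^k,\ybar^k)$ in a regular iteration and directly from $\ykpone=\ybar^k$ together with $\xkpone=\arg\min_x f(x,\ybar^k)$ in a skipping iteration. For the rate, I would apply \eqref{eq:lem22A} with comparison point $(x^*,y^*)$ and, in a regular iteration, \eqref{eq:lem22B} with comparison point $(x^*,y^*)$, using the monotonicity just proved to replace $F(p_g(\ybar^k))$ by the smaller $F(\xkpone,\ybarkpone)$. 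These give, respectively,
\[
 2\rho\phi_{k+1}\le\|\ykpone-y^*\|^2-\|\ybarkpone-y^*\|^2,\qquad 2\rho\phi_{k+1}\le\|\ybar^k-y^*\|^2-\|\ykpone-y^*\|^2,
\]
so $4\rho\phi_{k+1}\le\|\ybar^k-y^*\|^2-\|\ybarkpone-y^*\|^2$ in a regular iteration, whereas in a skipping iteration $\ykpone=\ybar^k$ collapses the first inequality to $2\rho\phi_{k+1}\le\|\ybar^k-y^*\|^2-\|\ybarkpone-y^*\|^2$.

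The last step is to sum these over $k=0,\dots,k-1$: the right-hand side telescopes to $\|\ybar^0-y^*\|^2-\|\ybar^k-y^*\|^2\le\|\ybar^0-y^*\|^2$, while the left-hand side equals $2\rho\sum_{j=1}^k\phi_j$ plus an additional $2\rho\phi_j$ for each of the $k_n$ regular iterations among the first $k$, and using $\phi_j\ge\phi_k$ for $j\le k$ turns this into $2\rho(k+k_n)\phi_k\le\|\ybar^0-y^*\|^2$, which is \eqref{eq:alms_partial_iter_complexity}. I expect the main obstacle to be the bookkeeping around skipping iterations: when \eqref{eq:cond1_lem22ybar} fails one loses the second telescoping inequality, so such an iteration contributes only $2\rho\phi_{k+1}$ rather than $4\rho\phi_{k+1}$ — this is exactly why the denominator is $k+k_n$ and not $2k$ — and one must also carefully distinguish the three families of points $(x^k,\ybar^k)$, $(\xkpone,\ykpone)$, and $(\xkpone,\ybarkpone)$ in each invocation of Lemma \ref{lem:lem22x}.
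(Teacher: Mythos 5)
Your proposal is correct and follows essentially the same route as the paper's proof: both rely on Lemma \ref{lem:lem22x} evaluated at the comparison point $(x^*,y^*)$ (noting $\nabla_x f=0$ at the iterates so the cross term drops), obtain the telescoping bound with weight $4\rho$ per regular iteration and $2\rho$ per skipping iteration, and combine this with the monotonicity chain $F(x^n,\ybar^n)\ge F(\xnpone,\ynpone)\ge F(\xnpone,\ybarnpone)$ to replace the summands by the final gap, giving the $k+k_n$ denominator. The only cosmetic difference is that you substitute the monotone bound before summing while the paper sums the raw $F$-values first; your additional explicit verification of condition \eqref{eq:cond1_lem22x} via the descent lemma and of the invariant $-\gamma^k\in\partial g(\ybar^k)$ is consistent with what the paper assumes.
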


\begin{proof}
See Appendix \ref{sec:pf_thm}.
\end{proof}
%We see that if $I_c = \emptyset$, i.e. there are no skipping steps or the function $g$ is smooth, then we have the iteration complexity bound as in Theorem 2.3 in \cite{goldfarb2009falm}.
\begin{rem}\label{rem:rho_mu}
For Theorem \ref{thm:alms_partial_complexity} to hold, we need $\rho \leq \frac{1}{L_y(f)}$.
%The Lipschitz constant $L(f)$ is usually hard to compute for general functions, but here it is easy to obtain.  We observe that in $Q_f$, we linearize $f(x,y)$ only with respect to $y$ and solve for $\ybar$, while $x$ is fixed.
From the definition of $f(x,y)$ in \eqref{eq:f_x_ybar}, it is easy to see that $L_y(f) = \frac{1}{\mu}$ regardless of the loss function $L(x)$.  Hence, we set $\rho = \mu$, so that condition \eqref{eq:cond1_lem22x} in Lemma \ref{lem:lem22x} is satisfied.
\end{rem}

In Section \ref{sec:fista_partial}, we will discuss the case where the iterations entirely consist of skipping steps.  We will show that this is equivalent to ISTA \cite{beckteboulle} with partial linearization as well as a variant of ADAL.  In this case, the inner Lagrange multiplier $\gamma$ is redundant.

\subsubsection{Solving the subproblems}
We now show how to solve the subproblems in Algorithm \ref{alg:alms_partial2}.  First, observe that since $\rho = \mu$
%\begin{equation}\label{eq:grad_f}
%    \nabla f(x,y) = \left(
%                            \begin{array}{c}
%                              A^T(Ax-b) \\
%                              0 \\
%                            \end{array}
%                          \right) + \left(
%                                      \begin{array}{c}
%                                        -C^Tv \\
%                                        v \\
%                                      \end{array}
%                                    \right) + \frac{1}{\mu}\left(
%                                                             \begin{array}{c}
%                                                               Dx - C^Ty \\
%                                                               -Cx + y \\
%                                                             \end{array}
%                                                           \right),
%\end{equation}
%where $D = C^TC$.
%Hence,
\begin{eqnarray}
% \nonumber to remove numbering (before each equation)
  \arg\min_{\ybar} \Lrho(x,y,\ybar,\nabla_y f(x,y)) &\equiv& \arg\min_{\ybar}\left\{ \gradyfxy^T\ybar + \frac{1}{2\mu}\|\ybar-y\|^2 + g(\ybar) \right\} \\
   %&\equiv& \arg\min_{\ybar}\left\{ \frac{1}{2\rho}\|y - \rho\gradyfxy - \ybar\|^2 + \glassonormybar \right\}\\
   &\equiv& \arg\min_{\ybar}\left\{ \frac{1}{2\mu}\|d - \ybar\|^2 + \glassonormybar \right\}, \label{eq:aplms_y_prob}
\end{eqnarray}
where $d = Cx - \mu v$.  Hence, $\ybar$ can be obtained by applying the block soft-thresholding operator $T(d_s,\mu\lambda)$ as in Section \ref{sec:ADAL}.  %Similarly, this is the case for
%\begin{equation}
%    \min_{\ybar} \mathcal{L}_\rho \equiv \frac{1}{2\rho}\|y - \rho\gamma - \ybar\|^2 + \glassonormybar.
%\end{equation}
Next consider the subproblem
\begin{equation}\label{eq:almsp_xy}
    \min_{(x,y)} \Lrho(x,y,\ybar,\gamma) \equiv \min_{(x,y)} \left\{ f(x,y) + \gamma^T(\ybar-y) + \frac{1}{2\mu}\|\ybar-y\|^2 \right\}.
\end{equation}
It is easy to verify that solving the linear system given by the optimality conditions for \eqref{eq:almsp_xy} by block Gaussian elimination yields the system
\begin{equation}\label{eq:alalms_linsys}
    \left( A^TA + \frac{1}{2\mu}D  \right)x = r_x + \frac{1}{2}C^Tr_y
\end{equation}
for computing $x$, where $r_x = A^Tb + C^Tv$ and $r_y = -v + \gamma + \frac{\ybar}{\rho}$.  Then $y$ can be computed as $(\frac{\mu}{2})(r_y + \frac{1}{\mu}Cx)$.

As in Section \ref{sec:ADAL}, only one Cholesky factorization of $A^TA + \frac{1}{2\mu}D$ is required for each invocation of Algorithm \ref{alg:alms_partial2}.  Hence, the amount of work involved in each iteration of Algorithm \ref{alg:alms_partial2} is comparable to that of an ADAL iteration.

It is straightforward to derive an accelerated version of Algorithm \ref{alg:alms_partial2}, which we shall refer to as FAPLM-S, that corresponds to a partial-split version of the FALM algorithm proposed in \cite{goldfarb2009falm} and also requires $O(\sqrt{\frac{L(f)}{\epsilon}})$ iterations to obtain an $\epsilon$-optimal solution.  In Section \ref{sec:fistap}, we present an algorithm FISTA-p, which is a special version of
%the next two sections an algorithm based on ISTA \cite{beckteboulle} and an accelerated version of it, which corresponds to
FAPLM-S in which every iteration is a skipping iteration and which has a much simpler form than FAPLM-S, while having essentially the same iteration complexity.

It is also possible to apply ALM-S directly, which splits both $x$ and $y$, to solve the augmented Lagrangian subproblem.  Similar to \eqref{eq:aug_lag_split}, we reformulate \eqref{eq:aug_lag} as
\begin{eqnarray}
% \nonumber to remove numbering (before each equation)
  \min_{(x,y),(\xbar,\ybar)} && \frac{1}{2}\|Ax-b\|^2 - v^T(Cx-y) + \frac{1}{2\mu}\|Cx-y\|^2 + \lambda\sum_s\|\ybar_s\| \label{eq:aug_lag_fullsplit} \\
  \nonumber s.t. && x = \xbar, \\
  \nonumber && y = \ybar.
\end{eqnarray}
The functions $f$ and $g$ are defined as in \eqref{eq:f_x_ybar} and \eqref{eq:g_y}, except that now we write $g$ as $\gxbarybar$ even though the variable $\xbar$ does not appear in the expression for $g$.  It can be shown that $\ybar$ admits exactly the same expression as in APLM-S, whereas $\xbar$ is obtained by a gradient step, $x - \rho\nabla_x f(x,y)$.  To obtain $x$, we solve the linear system
\begin{equation}\label{eq:full_x_sln}
    \left( A^TA + \frac{1}{\mu+\rho}D + \frac{1}{\rho}I \right)x = r_x + \frac{\rho}{\mu+\rho}C^Tr_y,
\end{equation}
after which $y$ is computed by $y = \left( \frac{\mu\rho}{\mu+\rho} \right)\left( r_y + \frac{1}{\mu}Cx  \right)$.

\begin{rem}\label{rem:rho_mu_alms_full}
For ALM-S, the Lipschitz constant for $\nabla f(x,y)$ $L_f = \lambda_{max}(A^TA) + \frac{1}{\mu}d_{max}$, where $d_{max} = \max_i D_{ii} \geq 1$.
For the complexity results in \cite{goldfarb2009falm} to hold, we need $\rho \leq \frac{1}{L_f}$.  Since $\lambda_{max}(A^TA)$ is usually not known, it is necessary to perform a backtracking line-search on $\rho$ to ensure that $F(x^{k+1},y^{k+1}) \leq \mathcal{L}_\rho(x^{k+1},y^{k+1},\xbar^k,\ybar^k,\gamma^k)$.  In practice, we adopted the following continuation scheme instead.  We initially set $\rho = \rho_0 = \frac{\mu}{d_{max}}$ and  decreased $\rho$ by a factor of $\beta$ after a given number of iterations until $\rho$ reached a user-supplied minimum value $\rho_{min}$.  This scheme prevents $\rho$ from being too small, and hence negatively impacting computational performance.  However, in both cases the left-hand-side of the system \eqref{eq:full_x_sln} has to be re-factorized every time $\rho$ is updated.
\end{rem}

As we have seen above, the Lipschitz constant resulting from splitting both $x$ and $y$ is potentially much larger than $\frac{1}{\mu}$.  Hence, partial-linearization reduces the Lipschitz constant and hence improves the bound on the right-hand-side of \eqref{eq:alms_partial_iter_complexity} and allows Algorithm \ref{alg:alms_partial2} to take larger step sizes (equal to $\mu$).  Compared to ALM-S, solving for $x$ in the skipping step (Line \ref{line:x_skip}) becomes harder.  Intuitively, APLM-S does a better job of `load-balancing' by managing a better trade-off between the hardness of the subproblems and the practical convergence rate.  %distributing the hardness of the problem more evenly between its two subproblems.

\subsection{ISTA: partial linearization (ISTA-p)}\label{sec:fista_partial}
We can also minimize the augmented Lagrangian \eqref{eq:aug_lag}, which we write as $\mathcal{L}(x,y,v) = f(x,y) + g(y)$ with $f(x,y)$ and $g(y)$ defined as in \eqref{eq:f_x_ybar} and \eqref{eq:g_y}, using a variant of ISTA that only linearizes $f(x,y)$ with respect to the $y$ variables.  As in Section \ref{sec:alms_partial2}, we can set $\rho = \mu$ and guarantee the convergence properties of ISTA-p (see Corollary \ref{col:aplms_complexity} below). Formally, let $(x,y)$ be the current iterate and $(x^+, y^+)$ be the next iterate. We compute $y^+$ by
\begin{eqnarray}
% \nonumber to remove numbering (before each equation)
  y^+ &=& \arg\min_{\yprime} \Lrho(x,y,\yprime,\nabla_y f(x,y)) \\
   %&:=& \arg\min_{\yprime} \left\{ f(x,y) + \nabla_yf(x,y)^T(y^\prime - y) + \frac{1}{2\rho}\|y^\prime - y\|^2 + g(y^\prime)  \right\} \\
   &=& \arg\min_{\yprime} \left\{ \frac{1}{2\mu}\sum_j(\|y^\prime_j - d_{y_j}\|^2 + \lambda\|y^\prime_j\|)  \right\} \label{eq:fista_y_prob},
\end{eqnarray}
where $d_y = Cx - \mu v$.  Hence the solution $y^+$ to problem \eqref{eq:fista_y_prob} is given blockwise by $T([d_y]_j, \mu\lambda), j = 1,\cdots,J$.
%\begin{equation}
%    y^+_j = \frac{d_{y_j}}{\|d_{y_j}\|}\max(0,\|d_{y_j}\|-\lambda\rho), \quad j = 1,\ldots,J.
%\end{equation}

Now given $y^+$, we solve for $x^+$ by
\begin{eqnarray}
% \nonumber to remove numbering (before each equation)
  \nonumber x^+ &=& \arg\min_{\xprime} f(\xprime,y^+) \\
   &=& \arg\min_{\xprime} \left\{ \frac{1}{2}\|A\xprime-b\|^2 - v^T(C\xprime - y^+) + \frac{1}{2\mu}\|C\xprime-y^+\|^2  \right\} \label{eq:fista_partial_x_prob}
\end{eqnarray}
The algorithm that implements subroutine ApproxAugLagMin$(x,y,v)$ in Algorithm \ref{alg:aug_lag} by ISTA with partial linearization is stated below as Algorithm \ref{alg:ista_partial}.

\begin{algorithm}
\caption{ISTA-p (partial linearization)}
\begin{algorithmic}[1]\label{alg:ista_partial}
\STATE Given $x^0, \ybar^0, v$.  Choose $\rho$.  Define $f(x,y)$ as in \eqref{eq:f_x_ybar}.
\FOR{$k = 0,1,\cdots$ until stopping criterion is satisfied}
    \STATE $x^{k+1} \gets \arg\min_x f(x;\ybar^k)$
    \STATE $\ybarkpone \gets \arg\min_y \Lrho(x^{k+1},\ybar^k,y,\nabla_y f(\xkpone,\ybar^k))$
\ENDFOR
\RETURN $(\xKpone,\ybarKpone)$
\end{algorithmic}
\end{algorithm}
As we remarked in Section \ref{sec:alms_partial2}, Algorithm \ref{alg:ista_partial} is equivalent to Algorithm \ref{alg:alms_partial2} (APLM-S) where every iteration is a skipping iteration.  Hence, we have from Theorem \ref{thm:alms_partial_complexity}.
\begin{cor}\label{col:aplms_complexity}
Assume $\nabla_y f(\cdot,\cdot)$ is Lipschitz continuous with Lipschitz constant $L_y(f)$.  For $\rho \leq \frac{1}{L_y(f)}$, the iterates $(x^k,\ybar^k)$ in Algorithm \ref{alg:ista_partial} satisfy
\begin{equation}\label{eq:ista_partial_iter_complexity}
    F(x^k, \ybar^k) - F(x^*, y^*) \leq \frac{\|\ybar^0 - y^*\|^2}{2\rho k}, \quad \forall k,
\end{equation}
where $(x^*,y^*)$ is an optimal solution to \eqref{eq:aug_lag_split}.
\end{cor}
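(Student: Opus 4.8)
The plan is to obtain \eqref{eq:ista_partial_iter_complexity} directly from Theorem \ref{thm:alms_partial_complexity}. As noted just before the statement, Algorithm \ref{alg:ista_partial} generates exactly the sequence $(x^k,\ybar^k)$ that Algorithm \ref{alg:alms_partial2} (APLM-S) would produce if the skipping branch (Lines \ref{line:y_skip}--\ref{line:x_skip}) were executed at every iteration, started from the same $(x^0,\ybar^0)$ and with the same $\rho$: in a skipping iteration APLM-S sets $y^{k+1}=\ybar^k$, then $x^{k+1}=\arg\min_x f(x,\ybar^k)$, then $\ybar^{k+1}=\arg\min_{\ybar}\Lrho(x^{k+1},\ybar^k,\ybar,\nabla_y f(x^{k+1},\ybar^k))$, which are precisely the ISTA-p updates (and the inner multiplier $\gamma$ is never consulted, as observed in Section \ref{sec:fista_partial}). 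Consequently the number $k_n$ of regular iterations among the first $k$ iterations is $0$, and substituting $k_n=0$ into \eqref{eq:alms_partial_iter_complexity} yields \eqref{eq:ista_partial_iter_complexity}.

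Since APLM-S skips only when the test at Line \ref{line:almsp_check} fails, ``always skipping'' is not literally a run of Algorithm \ref{alg:alms_partial2}, so to be safe I would also re-derive the estimate directly from Lemma \ref{lem:lem22x}, retaining only the skipping-iteration part of the argument behind Theorem \ref{thm:alms_partial_complexity}. First, by Remark \ref{rem:rho_mu}, taking $\rho\le 1/L_y(f)$ (here $\rho=\mu$, $L_y(f)=1/\mu$) makes the quadratic model in $\Lrho$ a majorant: the descent lemma gives $f(x^{k+1},\ybar)\le f(x^{k+1},\ybar^k)+\nabla_y f(x^{k+1},\ybar^k)^\top(\ybar-\ybar^k)+\tfrac{1}{2\rho}\|\ybar-\ybar^k\|^2$, and adding $g(\ybar)$ shows that hypothesis \eqref{eq:cond1_lem22x} of Lemma \ref{lem:lem22x} holds at each iteration with $(x,y)=(x^{k+1},\ybar^k)$, for which the point ``$\bar q$'' of the lemma equals $\ybar^{k+1}$. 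Applying \eqref{eq:lem22A} with comparison point $(\xbar,\ybar)=(x^*,y^*)$ gives
\[
  2\rho\big(F(x^*,y^*)-F(x^{k+1},\ybar^{k+1})\big)\ \ge\ \|\ybar^{k+1}-y^*\|^2-\|\ybar^k-y^*\|^2+2\rho\,(x^*-x^{k+1})^\top\nabla_x f(x^{k+1},\ybar^k),
\]
and here the cross term vanishes because $x^{k+1}=\arg\min_x f(x;\ybar^k)$ forces $\nabla_x f(x^{k+1},\ybar^k)=0$. (This is exactly the feature that distinguishes ISTA-p from a regular APLM-S iteration, which cannot zero out that term and must instead absorb it, producing the extra $+1$ per regular iteration in $k+k_n$.)

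Rearranging, $2\rho\big(F(x^{k+1},\ybar^{k+1})-F(x^*,y^*)\big)\le\|\ybar^k-y^*\|^2-\|\ybar^{k+1}-y^*\|^2$; summing over $k=0,\dots,K-1$ telescopes to $2\rho\sum_{j=1}^{K}\big(F(x^j,\ybar^j)-F(x^*,y^*)\big)\le\|\ybar^0-y^*\|^2$. Finally I would establish monotonicity $F(x^{k+1},\ybar^{k+1})\le F(x^{k+1},\ybar^k)\le F(x^k,\ybar^k)$ — the first inequality from \eqref{eq:cond1_lem22x} together with $\Lrho(x^{k+1},\ybar^k,\ybar^{k+1},\cdot)\le\Lrho(x^{k+1},\ybar^k,\ybar^k,\cdot)=F(x^{k+1},\ybar^k)$, the second from $x^{k+1}=\arg\min_x f(x;\ybar^k)$ — so that the last term of the telescoped sum is its smallest, giving $F(x^K,\ybar^K)-F(x^*,y^*)\le\|\ybar^0-y^*\|^2/(2\rho K)$, i.e. \eqref{eq:ista_partial_iter_complexity} with $k=K$. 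The only point requiring real care is the bookkeeping with \eqref{eq:cond1_lem22x}: one must confirm that $\rho\le 1/L_y(f)$ makes it hold unconditionally in this partially-linearized setting, so that no skipping test is ever needed and $k_n$ is genuinely $0$; the rest is the standard proximal-gradient telescoping.
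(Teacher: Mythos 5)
Your proposal is correct and follows essentially the same route as the paper: the paper's proof of this corollary is precisely the observation that ISTA-p is APLM-S with every iteration a skipping iteration, so Theorem \ref{thm:alms_partial_complexity} applies with $k_n=0$. Your supplementary direct derivation is just the skipping-iteration specialization of the argument in Appendix \ref{sec:pf_thm} (inequalities \eqref{eq:214x}, \eqref{eq:217x}, \eqref{eq:219x}, \eqref{eq:221xx} restricted to $I=\emptyset$), and it correctly addresses the minor point the paper glosses over, namely that an all-skipping run is not literally an execution of Algorithm \ref{alg:alms_partial2}'s conditional branch.
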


It is easy to see that \eqref{eq:fista_y_prob} is equivalent to \eqref{eq:adal_y_prob}, and that \eqref{eq:fista_partial_x_prob} is the same as \eqref{eq:adal_x_prob} in ADAL.

\begin{rem}
We have shown that with a fixed $v$, the ISTA-p iterations are exactly the same as the ADAL iterations.  The difference between the two algorithms is that ADAL updates the (outer) Lagrange multiplier $v$ in each iteration, while in ISTA-p, $v$ stays the same throughout the inner iterations.  We can thus view ISTA-p as a variant of ADAL with delayed updating of the Lagrange multiplier.
\end{rem}

The `load-balancing' behavior discussed in Section \ref{sec:alms_partial2} is more obvious for ISTA-p.  As we will see in Section \ref{sec:fista_full}, if we apply ISTA (with full linearization) to minimize \eqref{eq:aug_lag}, solving for $x$ is simply a gradient step.  Here, we need to minimize $f(x,y)$ with respect to $x$ exactly, while being able to take larger step sizes in the other subproblem, due to the smaller associated Lipschitz constant.

\subsection{FISTA-p}\label{sec:fistap}
We now present an accelerated version FISTA-p of ISTA-p. FISTA-p is a special case of FAPLM-S with a skipping step occurring in every iteration.%, noting that Lines \ref{line:falm_xy},\ref{line:falm_ifskipped} to \ref{line:falm_gamma_k1} are redundant in this case.
We state the algorithm formally as Algorithm \ref{alg:fista_partial}.
\begin{algorithm}
\caption{FISTA-p (partial linearization)}
\begin{algorithmic}[1]\label{alg:fista_partial}
\STATE Given $x^0, \ybar^0, v$. Choose $\rho$, and $z^0 = \ybar^0$. Define $f(x,y)$ as in \eqref{eq:f_x_ybar}.
\FOR{$k = 0,1,\cdots,K$}
    \STATE $x^{k+1} \gets \arg\min_x f(x;z^k)$ \label{line:fistap_x}
    \STATE $\ybarkpone \gets \arg\min_y \Lrho(x^{k+1},z^k,y,\nabla_y f(\xkpone,z^k))$ \label{line:fistap_y}
    \STATE $\tkpone \gets \frac{1+\sqrt{1+4t_k^2}}{2}$
    \STATE $z^{k+1} \gets \ybarkpone + \left( \frac{t_k - 1}{\tkpone} \right)(\ybarkpone - \ybar^{k})$
\ENDFOR
\RETURN $(\xKpone,\ybarKpone)$
\end{algorithmic}
\end{algorithm}
The iteration complexity of FISTA-p (and FAPLM-S) is given by the following theorem.

\begin{thm}
Assuming that $\nabla_y f(\cdot)$ is Lipschitz continuous with Lipschitz constant $L_y(f)$ and $\rho \leq \frac{1}{L_y(f)}$, the sequence $\{x^k,\ybar^k\}$ generated by Algorithm \ref{alg:fista_partial} satisfies
\begin{equation}\label{eq:thm_fista_complexity}
    F(x^k,\ybar^k) - F(x^*,y^*) \leq \frac{2\|\ybar^0 - y^*\|^2}{\rho (k+1)^2},
\end{equation}
\end{thm}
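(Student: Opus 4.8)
The plan is to adapt the Beck--Teboulle convergence argument for FISTA \cite{beckteboulle}, using the first inequality \eqref{eq:lem22A} of Lemma \ref{lem:lem22x} in place of their basic ``three--point'' inequality, and exploiting that FISTA-p (Algorithm \ref{alg:fista_partial}) is the special case of FAPLM-S in which a skipping step is taken at every iteration, so that the extra variable $x$ never enters the recursion. First I would check that Lemma \ref{lem:lem22x} applies unconditionally at every iteration. Line \ref{line:fistap_x} makes $\xkpone$ an exact minimizer of $f(\cdot\,;z^k)$, hence $\nabla_x f(\xkpone,z^k)=0$, and Line \ref{line:fistap_y} produces exactly the point $\arg\min_{\ybar}\Lrho(\xkpone,z^k,\ybar,\nabla_y f(\xkpone,z^k))$ to which the lemma refers with $(x,y)=(\xkpone,z^k)$. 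Because $f(\xkpone,\cdot)$ is quadratic with $L_y(f)$-Lipschitz gradient in $y$ and $\rho\le 1/L_y(f)$, the descent inequality $f(\xkpone,\ybarkpone)\le f(\xkpone,z^k)+\nabla_y f(\xkpone,z^k)^T(\ybarkpone-z^k)+\tfrac1{2\rho}\|\ybarkpone-z^k\|^2$ holds, and this is precisely hypothesis \eqref{eq:cond1_lem22x}; by Remark \ref{rem:rho_mu} the choice $\rho=\mu$ makes it an equality, so no line search is required. Consequently \eqref{eq:lem22A} is available at each iteration, and because $\nabla_x f(\xkpone,z^k)=0$ the term $2\rho(\xbar-x)^T\gradxfxy$ there vanishes for every $\xbar$.

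With $v_k:=F(x^k,\ybar^k)-F(x^*,y^*)\ge 0$, I would apply \eqref{eq:lem22A} at $(x,y)=(\xkpone,z^k)$ (minimizer $\ybarkpone$) for the two choices $(\xbar,\ybar)=(x^k,\ybar^k)$ and $(\xbar,\ybar)=(x^*,y^*)$, obtaining $2\rho(v_k-v_{k+1})\ge\|\ybarkpone-\ybar^k\|^2-\|z^k-\ybar^k\|^2$ and $-2\rho v_{k+1}\ge\|\ybarkpone-y^*\|^2-\|z^k-y^*\|^2$ respectively. Following Beck--Teboulle, I would multiply the first inequality by $t_k-1\ge 0$, add the second, scale the result by $t_k$, use the relation $t_k^2-t_k=t_{k-1}^2$ that follows from the update rule for the $t$'s, and rearrange the quadratic terms by completion of squares (using $\|a\|^2+2a^Tb=\|a+b\|^2-\|b\|^2$); the right-hand side then collapses to $\|t_k\ybarkpone-(t_k-1)\ybar^k-y^*\|^2-\|t_k z^k-(t_k-1)\ybar^k-y^*\|^2$. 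The crux is the observation that the momentum step --- which in terms of $z^k$ reads $z^k=\ybar^k+\tfrac{t_{k-1}-1}{t_k}(\ybar^k-\ybar^{k-1})$, with $z^0=\ybar^0$ and $t_0=1$ --- is chosen precisely so that $t_k z^k-(t_k-1)\ybar^k=t_{k-1}\ybar^k-(t_{k-1}-1)\ybar^{k-1}$. Hence, setting $u_k:=t_{k-1}\ybar^k-(t_{k-1}-1)\ybar^{k-1}-y^*$ (so $u_1=\ybar^1-y^*$), the whole relation becomes the telescoping inequality $2\rho t_{k-1}^2 v_k-2\rho t_k^2 v_{k+1}\ge\|u_{k+1}\|^2-\|u_k\|^2$, valid for $k\ge 1$.

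Finally I would sum this from $k=1$ to $k=n-1$ to get $2\rho t_{n-1}^2 v_n+\|u_n\|^2\le 2\rho v_1+\|\ybar^1-y^*\|^2$, bound the right-hand side by $\|\ybar^0-y^*\|^2$ by applying \eqref{eq:lem22A} once more at the first step (where $z^0=\ybar^0$) with $(\xbar,\ybar)=(x^*,y^*)$, and conclude $2\rho t_{n-1}^2 v_n\le\|\ybar^0-y^*\|^2$; together with the bound $t_{n-1}\ge\tfrac{n+1}{2}$, an immediate induction from $t_0=1$ and $\tkpone\ge t_k+\tfrac12$, this yields $F(x^n,\ybar^n)-F(x^*,y^*)=v_n\le\frac{2\|\ybar^0-y^*\|^2}{\rho(n+1)^2}$, i.e.\ \eqref{eq:thm_fista_complexity}. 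I expect the only genuine obstacle to be the index bookkeeping in the second step: keeping track of which $t_k$ multiplies which inequality, verifying that it is the coefficient $\tfrac{t_{k-1}-1}{t_k}$ in $z^k$ (equivalently the $\tfrac{t_k-1}{\tkpone}$ written in the line defining $z^{k+1}$) that makes the two squared norms telescope, and managing the off-by-one between the loop index and the usual FISTA $t$-indexing. Once this is settled, the bound is the classical FISTA estimate of \cite{beckteboulle} with $1/L$ replaced by $\rho$; since the $x$-gradient terms drop out identically, the same argument proves the claim for FAPLM-S, with \eqref{eq:cond1_lem22x} enforced by the skipping test on regular iterations and automatic on skipping ones.
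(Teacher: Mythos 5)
Your proposal is correct and follows exactly the route the paper intends: the paper states this theorem without proof, appealing to the fact that FISTA-p is the all-skipping special case of FAPLM-S (a partial-split FALM), and your write-up supplies the missing Beck--Teboulle argument, correctly using the first part of Lemma \ref{lem:lem22x} at $(x,y)=(\xkpone,z^k)$ with the $x$-gradient term annihilated by the exact minimization in Line \ref{line:fistap_x}. The only bookkeeping point worth flagging is that Algorithm \ref{alg:fista_partial} does not specify $t_0$; your choice $t_0=1$ (or $t_0=0$ as in the paper's FISTA, which only shifts the index by one) both yield $t_{n-1}\ge\tfrac{n+1}{2}$ and hence the stated constant.
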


Although we need to solve a linear system in every iteration of Algorithms \ref{alg:alms_partial2}, \ref{alg:ista_partial}, and \ref{alg:fista_partial}, the left-hand-side of the system stays constant throughout the invocation of the algorithms because, following Remark \ref{rem:rho_mu}, we can always set $\rho = \mu$.  Hence, no line-search is necessary, and this step essentially requires only one backward- and one forward-substitution, the complexity of which is the same as a gradient step.

\subsection{ISTA/FISTA: full linearization}\label{sec:fista_full}
ISTA solves the following problem in each iteration to produce the next iterate $\left(
                                                                                       \begin{array}{c}
                                                                                         x^+ \\
                                                                                         y^+ \\
                                                                                       \end{array}
                                                                                     \right)
$.
\begin{eqnarray}
% \nonumber to remove numbering (before each equation)
  \nonumber \min_{x^\prime,y^\prime} && \frac{1}{2\rho} \left\| \left(
                                         \begin{array}{c}
                                           x^\prime \\
                                           y^\prime \\
                                         \end{array}
                                       \right) - d
   \right\|^2 + \glassonormy \\
   &\equiv& \frac{1}{2\rho}\|x^\prime - d_x\|^2 + \sum_{j}\frac{1}{2\rho}\left( \|y^\prime_j - d_{y_j}\|^2 + \lambda\|y^\prime_j\| \right), \label{eq:fista}
\end{eqnarray}
where $d = \left(
                                          \begin{array}{c}
                                            d_x \\
                                            d_y \\
                                          \end{array}
                                        \right)
        = \left(
            \begin{array}{c}
              x \\
              y \\
            \end{array}
          \right) - \rho\nabla f(x,y)
$, and $f(x,y)$ is defined in \eqref{eq:f_x_ybar}.
It is easy to see that we can solve for $x^+$ and $y^+$ separately in \eqref{eq:fista}.  Specifically,
\begin{eqnarray}
% \nonumber to remove numbering (before each equation)
  x^+ &=& d_x \label{eq:fista_x}\\
  y^+_j &=& \frac{d_{y_j}}{\|d_{y_j}\|}\max(0,\|d_{y_j}\|-\lambda\rho), \quad j = 1,\ldots,J.
\end{eqnarray}
Using ISTA to solve the outer augmented Lagrangian \eqref{eq:aug_lag} subproblem is equivalent to taking only skipping steps in ALM-S.  In our experiments, we used the accelerated version of ISTA, i.e. FISTA (Algorithm \ref{alg:fista_full}) to solve \eqref{eq:aug_lag}.

\begin{algorithm}
\caption{FISTA}
\begin{algorithmic}[1]\label{alg:fista_full}
\STATE Given $\xbar^0, \ybar^0, v$. Choose $\rho^0$.  Set $t_0 = 0, z_x^0 = \xbar^0, z_y^0 = \ybar^0$.  Define $f(x,y)$ as in \eqref{eq:f_x_ybar}.
\FOR{$k = 0,1,\cdots$ until stopping criterion is satisfied}
    \STATE Perform a backtracking line-search on $\rho$, starting from $\rho_0$.
    \STATE $\left(
                                          \begin{array}{c}
                                            d_x \\
                                            d_y \\
                                          \end{array}
                                        \right)
        = \left(
            \begin{array}{c}
              z_x^k \\
              z_y^k \\
            \end{array}
          \right) - \rho\nabla f(z_x^k,z_y^k)$
    \STATE $\xbarkpone \gets d_x$
    \STATE $\ybar_j^{k+1} \gets \frac{d_{y_j}}{\|d_{y_j}\|}\max(0,\|d_{y_j}\|-\lambda\rho), \quad j = 1,\ldots,J.$
    \STATE $t_{k+1} \gets \frac{1+\sqrt{1+4t_k^2}}{2}$
    \STATE $z_x^{k+1} \gets \xbar^k + \frac{t_k - 1}{t_{k+1}}(\xbarkpone - \xbar^k)$
    \STATE $z_y^{k+1} \gets \ybar^k + \frac{t_k - 1}{t_{k+1}}(\ybarkpone - \ybar^k)$
\ENDFOR
\RETURN $(\xbar^{K+1},\ybar^{K+1})$
\end{algorithmic}
\end{algorithm}

FISTA (resp. ISTA) is, in fact, an inexact version of FISTA-p (resp. ISTA-p), where we minimize with respect to $x$ a linearized approximation
\begin{equation*}
    \tilde{f}(x,z^k) := f(x^k,z^k) + \nabla_x f(x^k,z^k)(x-x^k) + \frac{1}{2\rho}\|x-x^k\|^2
\end{equation*}
of the quadratic objective function $f(x,z^k)$ in \eqref{eq:fista_partial_x_prob}.  The update to $x$ in Line \ref{line:fistap_x} of Algorithm \ref{alg:fista_partial} is replaced by \eqref{eq:fista_x} as a result.  Similar to FISTA-p, FISTA is also a special skipping version of the full-split FALM-S.  Considering that FISTA has an iteration complexity of $O(\frac{1}{k^2})$, it is not surprising that FISTA-p has the same iteration complexity.

\begin{rem}
%Algorithm \ref{alg:aug_lag} (OGLasso-AugLag) with FISTA as the ApproxAugLagMin$(x,y,v)$ subroutine in Line \ref{line:solve_auglag} is a purely first-order method, i.e. it requires only gradient computations.  Instead of performing exact minimization like \eqref{eq:fista_partial_x_prob} in FISTA-p, it computes the next iterate of $x$ by a gradient step. As a result,
Since FISTA requires only the gradient of $f(x,y)$, it can easily handle any smooth convex loss function, such as the logistic loss for binary classification, $L(x) = \sum_{i=1}^{N}\log(1+\exp(-b_i a_i^Tx))$, where $a_i^T$ is the $i$-th row of $A$, and $b$ is the vector of labels. Moreover, when the scale of the data $(\min\{n,m\})$ is so large that it is impractical to compute the Cholesky factorization of $A^TA$, FISTA is a good choice to serve as the subroutine ApproxAugLagMin$(x,y,v)$ in OGLasso-AugLag.
\end{rem}

%The gradient of the loss function is
%\begin{equation*}
%    \nabla l(x) = -\sum_{i=1}^{N}b_i a_i\left( \frac{\exp(-b_ia_i^Tx)}{1+\exp(-b_ia_i^Tx)} \right),
%\end{equation*}
%and accordingly,
%\begin{equation*}
%    \nabla_x f(x,y) = \nabla l(x) - C^Tv + \frac{1}{\mu}(Dx - C^Ty).
%\end{equation*}

\section{Overlapping Group $l_1/l_\infty$-Regularization}\label{sec:L1infty}
The subproblems with respect to $y$ (or $\ybar$) involved in all the algorithms presented in the previous sections take the following form
\begin{equation}\label{eq:mult_task_lasso}
    \min_y \frac{1}{2\rho}\|c - y\|^2 + \tilde{\Omega}(y),
\end{equation}
where $\tilde{\Omega}(y) = \lambda\sum_{s\in \tilde{\mathcal{S}}}w_s\|y_s\|_\infty$ in the case of $l_1/l_\infty$-regularization.  In \eqref{eq:adal_y_prob}, for example, $c = Cx - \mu v$.  The solution to \eqref{eq:mult_task_lasso} is the proximal operator of $\tilde{\Omega}$ \cite{combettes2006signal,combettes2009proximal}. Similar to the classical Group Lasso, this problem is block-separable and hence all blocks can be solved simultaneously.

Again, for notational simplicity, we assume $w_s = 1 \quad \forall s \in \tilde{\mathcal{S}}$ and omit it from now on.  For each $s\in \tilde{\mathcal{S}}$, the subproblem in \eqref{eq:mult_task_lasso} is of the form
\begin{equation}\label{eq:l1inf_subprob}
    \min_{y_s} \quad \frac{1}{2}\|c_s - y_s\|^2 + \rho\lambda\|y_s\|_\infty.
\end{equation}
As shown in \cite{sparsa}, the optimal solution to the above problem is $c_s - P(c_s)$, where $P$ denotes the orthogonal projector onto the ball of radius $\rho\lambda$ in the dual norm of the $l_\infty$-norm, i.e. the $l_1$-norm.  The Euclidean projection onto the simplex can be computed in (expected) linear time \cite{duchi2008efficient, brucker1984n}. Duchi et al. \cite{duchi2008efficient} show that the problem of computing the Euclidean projection onto the $l_1$-ball can be reduced to that of finding the Euclidean projection onto the simplex in the following way.  First, we replace $c_s$ in problem \eqref{eq:l1inf_subprob} by $|c_s|$, where the absolute value is taken component-wise.  After we obtain the projection  $z_s$ onto the simplex, we can construct the projection onto the $l_1$-ball by setting $y_s^* = sign(c_s)z_s$, where $sign(\cdot)$ is also taken component-wise.

\section{Experiments}
We tested the OGLasso-AugLag framework (Algorithm \ref{alg:aug_lag}) with four subroutines: ADAL, APLM-S, FISTA-p, and FISTA.  We implemented the framework with the first three subroutines in C++ to compare them with the ProxFlow algorithm proposed in \cite{obozinski2010network}.  We used the C interface and BLAS and LAPACK subroutines provided by the AMD Core Math Library (ACML)\footnote{http://developer.amd.com/libraries/acml/pages/default.aspx.  Ideally, we should have used the Intel Math Kernel Library (Intel MKL), which is optimized for Intel processors, but Intel MKL is not freely available.}.  To compare with ProxGrad \cite{chen2010efficient}, we implemented the framework and all four algorithms in Matlab.  We did not include ALM-S in our experiments because it is time-consuming to find the right $\rho$ for the inner loops as discussed in Remark \ref{rem:rho_mu_alms_full}, and our preliminary computational experience showed that ALM-S was slower than the other algorithms, even when the heuristic $\rho$-setting scheme discussed in Remark \ref{rem:rho_mu_alms_full} was used, because a large number of steps were skipping steps, which meant that the computation involved in solving the linear systems in those steps was wasted.
All of our experiments were performed on a laptop PC with an Intel Core 2 Duo 2.0 GHz processor and 4 Gb of memory.

%All the algorithms discussed in this section were implemented in Matlab 7.1 (R2010b), and the experiments were performed on a laptop PC with an Intel Core 2 Duo 2.0 GHz processor and 4 Gb of memory.

\subsection{Algorithm parameters and termination criteria}\label{sec:alg_config}
Each algorithm (framework + subroutine)\footnote{For conciseness, we use the subroutine names (e.g. FISTA-p) to represent the full algorithms that consist of the OGLasso-AugLag framework and the subroutines.} required several parameters to be set and termination criteria to be specified.  We used stopping criteria based on the primal and dual residuals as in \cite{boyd2010distributed}. We specify the criteria for each of the algorithms below,  but defer their derivation to Appendix \ref{sec:stopping_criteria}.  The maximum number of outer iterations was set to 500, and the tolerance for the outer loop was set at $\epsilon_{out} = 10^{-4}$.  The number of inner-iterations was capped at 2000, and the tolerance at the $l$-th outer iteration for the inner loops was $\epsilon_{in}^l$.  Our termination criterion for the outer iterations was
\begin{equation}\label{eq:outer_stopping}
    \max\{ r^l, s^l \} \leq \epsilon_{out},
\end{equation}
where $r^l = \frac{\|Cx^l - y^l\|}{\max\{\|Cx^l\|,\|y^l\|\}}$ is the outer relative primal residual and $s^l$ is the relative dual residual, which is given for each algorithm in Table \ref{tab:stopping_criteria}. Recall that $K+1$ is the index of the last inner iteration of the $l$-th outer iteration; for example, for APLM-S, $(x^{l+1},y^{l+1})$ takes the value of the last inner iterate $(x^{K+1},\ybar^{K+1})$.  We stopped the inner iterations when the maximum of the relative primal residual and the relative objective gradient for the inner problem was less than $\epsilon_{in}^l$.  (See Table \ref{tab:stopping_criteria} for the expressions of these two quantities.) We see there that $s^{l+1}$ can be obtained directly from the relative gradient residual computed in the last inner iteration of the $l$-th outer iteration.

\begin{table}
\begin{center}
\begin{tabular}{|c|c|c|c|}
  \hline
  \multirow{2}{*}{Algorithm} & \multirow{2}{*}{Outer rel. dual residual $s^{l+1}$} & \multicolumn{2}{|c|}{Inner iteration}  \\
  \cline{3-4}
  & & Rel. primal residual & Rel. objective gradient residual \\
  \hline
  % after \\: \hline or \cline{col1-col2} \cline{col3-col4} ...
  ADAL & $\frac{\|C^T(y^{l+1} - y^{l})\|}{\|C^T y^{l}\|}$ & - & - \\
  \hline
  FISTA-p & $\frac{\|C^T(\ybar^{K+1}-z^K)\|}{\|C^T z^K\|}$ & $\frac{\|\ybar^{k+1}-z^k\|}{\|z^k\|}$ & $\frac{\|C^T(\ybar^{k+1}-z^k)\|}{\|C^Tz^k\|}$ \\
  \hline
  APLM-S & $\frac{\|C^T(\ybar^{K+1}-y^{K+1})\|}{\|C^T y^{K+1}\|}$ & $\frac{\|\ybar^{k+1} - y^{k+1}\|}{\|y^{k+1}\|}$ & $\frac{\|C^T(\ybar^{k+1} - y^{k+1}\|)}{\|C^T y^{k+1}\|}$ \\
  \hline
  FISTA & $\frac{\left\| \left(
               \begin{array}{c}
                 \xbar^{K+1} \\
                 \ybar^{K+1} \\
               \end{array}
             \right) - \left(
                         \begin{array}{c}
                           z_x^K \\
                           z_y^K \\
                         \end{array}
                       \right)
 \right\|}{\left\|\left(
              \begin{array}{c}
                z_x^K \\
                z_y^K \\
              \end{array}
            \right)\right\|  }$ & $\frac{\left\| \left(
               \begin{array}{c}
                 \xbar^{k+1} \\
                 \ybar^{k+1} \\
               \end{array}
             \right) - \left(
                         \begin{array}{c}
                           z_x^k \\
                           z_y^k \\
                         \end{array}
                       \right)
 \right\|}{\left\|\left(
              \begin{array}{c}
                z_x^k \\
                z_y^k \\
              \end{array}
            \right)\right\|  }$ & $\frac{\left\| \left(
               \begin{array}{c}
                 \xbar^{k+1} \\
                 \ybar^{k+1} \\
               \end{array}
             \right) - \left(
                         \begin{array}{c}
                           z_x^k \\
                           z_y^k \\
                         \end{array}
                       \right)
 \right\|}{\left\|\left(
              \begin{array}{c}
                z_x^k \\
                z_y^k \\
              \end{array}
            \right)\right\|  }$ \\
  \hline
\end{tabular}
\end{center}
\caption{Specification of the quantities used in the outer and inner stopping criteria.}
\label{tab:stopping_criteria}
\end{table}

We set $\mu_0 = 0.01$ in all algorithms except that we set $\mu_0 = 0.1$ in ADAL for the data sets other than the first synthetic set and the breast cancer data set.  We set $\rho = \mu$ in FISTA-p and APLM-S and $\rho_0 = \mu$ in FISTA.

For Theorem \ref{thm:ial_conv} to hold, the solution returned by the function ApproxAugLagMin$(x,y,v)$ has to become increasingly more accurate over the outer iterations.  However, it is not possible to evaluate the sub-optimality quantity $\alpha^l$ in \eqref{eq:sub_acc_cond} exactly because the optimal value of the augmented Lagrangian $\mathcal{L}(x,y,v^l)$ is not known in advance.  In our experiments, we used the maximum of the relative primal and dual residuals $(\max\{r^l,s^l\})$ as a surrogate to $\alpha^l$ for two reasons: First, it has been shown in \cite{boyd2010distributed} that $r^l$ and $s^l$ are closely related to $\alpha^l$.  Second, the quantities $r^l$ and $s^l$ are readily available as bi-products of the inner and outer iterations.  To ensure that the sequence $\{\epsilon_{in}^{l}\}$ satisfies \eqref{eq:sub_acc_cond}, we basically set:
\begin{equation}\label{eq:eps_in}
    \epsilon_{in}^{l+1} = \beta_{in}\epsilon_{in}^l,
\end{equation}
with $\epsilon_{in}^0 = 0.01$ and $\beta_{in} = 0.5$.  However, since we terminate the outer iterations at $\epsilon_{out} > 0$, it is not necessary to solve the subproblems to an accuracy much higher than the one for the outer loop.  On the other hand, it is also important for $\epsilon_{in}^l$ to decrease to below $\epsilon_{out}$, since $s^l$ is closely related to the quantities involved in the inner stopping criteria.  Hence, we slightly modified \eqref{eq:eps_in} and used $\epsilon_{in}^{l+1} = \max\{\beta_{in}\epsilon_{in}^l, 0.2\epsilon_{out}\}$.

Recently, we became aware of an alternative `relative error' stopping criterion \cite{eckstein2011practical} for the inner loops, which guarantees convergence of Algorithm \ref{alg:aug_lag}.  In our context, this criterion essentially requires that the absolute dual residual is less than a fraction of the absolute primal residual.  For FISTA-p, for instance, this condition requires that the $(l+1)$-th iterate satisfies
\begin{equation*}
    2\left\|\left(
                     \begin{array}{c}
                       w_x^0 - x^{l+1} \\
                       w_y^l - y^{l+1} \\
                     \end{array}
                   \right)\right\|\bar{s}^{l+1} + \frac{(\bar{s}^{l+1})^2}{\mu^2} \leq \sigma(\bar{r}^{l+1})^2,
\end{equation*}
where $\bar{r}$ and $\bar{s}$ are the numerators in the expressions for $r$ and $s$ respectively, $\sigma = 0.99$, $w_x^0$ is a constant, and $w_y$ is an auxiliary variable updated in each outer iteration by $w_y^{l+1} = w_y^l - \frac{1}{\mu^2}C^T(\ybar^{K+1}-z^K)$.  We experimented with this criterion but did not find any computational advantage over the heuristic based on the relative primal and dual residuals.

\subsection{Strategies for updating $\mu$}
The penalty parameter $\mu$ in the outer augmented Lagrangian \eqref{eq:aug_lag} not only controls the infeasibility in the constraint $Cx = y$, but also serves as the step-length in the $y$-subproblem (and the $x$-subproblem in the case of FISTA).  We adopted two kinds of strategies for updating $\mu$.  The first one simply kept $\mu$ fixed.  In this case, choosing an appropriate $\mu_0$ was important for good performance.  This was especially true for ADAL in our computational experiments.  Usually, a $\mu_0$ in the range of $10^{-1}$ to $10^{-3}$ worked well.

The second strategy is a dynamic scheme based on the values $r^l$ and $s^l$ \cite{boyd2010distributed}. Since $\frac{1}{\mu}$ penalizes the primal infeasibility, a small $\mu$ tends to result in a small primal residual.  On the other hand, a large $\mu$ tends to yield a small dual residual.  Hence, to keep $r^l$ and $s^l$ approximately balanced in each outer iteration, our scheme updated $\mu$ as follows:
\begin{equation}\label{eq:dyn_mu}
    \mu^{l+1} \leftarrow \left\{
                           \begin{array}{ll}
                             \max\{\beta\mu^l, \mu_{min}\}, & \hbox{if $r^l > \tau s^l$} \\
                             \min\{\mu^l / \beta, \mu_{max}\}, & \hbox{if $s^l > \tau r^l$} \\
                             \mu^l, & \hbox{otherwise,}
                           \end{array}
                         \right.
\end{equation}
where we set $\mu_{max} = 10$, $\mu_{min} = 10^{-6}$, $\tau = 10$ and $\beta = 0.5$, except for the first synthetic data set, where we set $\beta = 0.1$ for ADAL, FISTA-p, and APLM-S.

%The speed of the subroutines could be significantly different between in Matlab and in C++.  The soft-thresholding operation, for example, is much more efficient in C++ than in Matlab.  As a result, another major operation, the Cholesky factorization, becomes relatively more expensive in C++.  This affects the updating strategy for the penalty parameter $\mu$, since a change in $\mu$ requires a new Cholesky factorization (if not in PCG mode) for all the algorithms except FISTA.  For the Matlab version, we adopted the dynamic updating scheme outlined above, but for the C++ version, we used the dynamic scheme only in PCG mode, when a change in $\mu$ does not incur extra computation.

\subsection{Synthetic examples}
To compare our algorithms with the ProxGrad algorithm proposed in \cite{chen2010efficient}, we first tested a synthetic data set (ogl) using the procedure reported in \cite{chen2010efficient} and \cite{jacob2009group}.  The sequence of decision variables $x$ were arranged in groups of ten, with adjacent groups having an overlap of three variables.  The support of $x$ was set to the first half of the variables.  Each entry in the design matrix $A$ and the non-zero entries of $x$ were sampled from i.i.d. standard Gaussian distributions, and the output $b$ was set to $b = Ax + \epsilon$, where the noise $\epsilon \sim \mathcal{N}(0,I)$.  Two sets of data were generated as follows: (a) Fix $n=5000$ and vary the number of groups $J$ from 100 to 1000 with increments of 100.  (b) Fix $J=200$ and vary $n$ from 1000 to 10000 with increments of 1000.  The stopping criterion for ProxGrad was the same as the one used for FISTA, and we set its smoothing parameter to $10^{-3}$.  Figure \ref{fig:chen_plot} plots the CPU times taken by the Matlab version of our algorithms and ProxGrad (also in Matlab) on theses scalability tests on $l_1/l_2$-regularization.
%To demonstrate the correctness of the solutions, we compare the ground truth signals with the signal vectors recovered by FISTA-p in Figure \ref{fig:chen_FISTAp_signals}.
A subset of the numerical results on which these plots are based is presented in Tables \ref{tab:chen_data1} and \ref{tab:chen_data2}.

The plots clearly show that the alternating direction methods were much faster than ProxGrad on these two data sets.  Compared to ADAL, FISTA-p performed slightly better, while it showed obvious computational advantage over its general version APLM-S.  In the plot on the left of Figure \ref{fig:chen_plot}, FISTA exhibited the advantage of a gradient-based algorithm when both $n$ and $m$ are large.  In that case (towards the right end of the plot), the Cholesky factorizations required by ADAL, APLM-S, and FISTA-p became relatively expensive.  When $\min\{n,m\}$ is small or the linear systems can be solved cheaply, as the plot on the right shows, FISTA-p and ADAL have an edge over FISTA due to the smaller numbers of inner iterations required.

%The ogl data sets are relatively easy in the sense that the overlapping components account for a small fraction of the groups.  All of the algorithms (except ProxGrad) required almost the same number of outer-iterations to reach the stopping criteria.  Hence, the CPU times are directly linked to the number of inner-iterations that each algorithm required to minimize the augmented Lagrangian.  It is then not surprising that ADAL performed the best in this test. Figure \ref{fig:chen_plot} shows that the partial linearization/split algorithms (FISTA-p and APLM-S) and ADAL have a clear computational advantage over ProxGrad.  The general observation from the numerical results is that FISTA-p and APLM-S run considerably faster than their full linearization/split counterparts (FISTA and ALM-S).

We generated a second data set (dct) using the approach from \cite{obozinski2010network} for scalability tests on both the $l_1/l_2$ and $l_1/l_\infty$ group penalties.  The design matrix $A$ was formed from over-complete dictionaries of discrete cosine transforms (DCT).  The set of groups were all the contiguous sequences of length five in one-dimensional space.  $x$ had about $10\%$ non-zero entries, selected randomly.  We generated the output as $b = Ax + \epsilon$, where $\epsilon \sim \mathcal{N}(0,0.01\|Ax\|^2)$.  We fixed $n = 1000$ and varied the number of features $m$ from 5000 to 30000 with increments of 5000.  This set of data leads to considerably harder problems than the previous set because the groups are heavily overlapping, and the DCT dictionary-based design matrix exhibits local correlations.  Due to the excessive running time required on Matlab, we ran the C++ version of our algorithms for this data set, leaving out APLM-S and ProxGrad, whose performance compared to the other algorithms is already fairly clear from Figure \ref{fig:chen_plot}.  For ProxFlow, we set the tolerance on the relative duality gap to $10^{-4}$, the same as $\epsilon_{out}$, and kept all the other parameters at their default values.

%The original configuration of ADAL, i.e. $(\mu_0,K_\mu) = (0.01,20)$ no longer worked well - ADAL consistently stopped at very suboptimal solutions.  We experimented with several values of $\mu_0$ and $K_\mu$ from the set $\{0.1,0.01,0.001\} \times \{20,200,\infty\}$ and eventually settled at $(0.1,\infty)$ for a balance of speed and quality.  For FISTA-p, we also imposed the extra stopping criterion \eqref{eq:stopping_rel_y} on the relative change in the iterates to ensure the quality of the solutions.  As the feature dimension increased, FISTA-p became significantly (about 30\%) faster than ADAL.  The partial linearization/split algorithms again outperformed their full linearization/split counterparts by a significant margin.

Figure \ref{fig:tony_plot} presents the CPU times required by the algorithms versus the number of features.  In the case of $l_1/l_2$-regularization, it is clear that FISTA-p outperformed the other two algorithms.  For $l_1/l_\infty$-regularization, ADAL and FISTA-p performed equally well and compared favorably to ProxFlow.  In both cases, the growth of the CPU times for FISTA follows the same trend as that for FISTA-p, and they required a similar number of outer iterations, as shown in Tables \ref{tab:dct_scal_results_l12} and \ref{tab:dct_scal_results_l1inf}.  However, FISTA lagged behind in speed due to larger numbers of inner iterations.  Unlike in the case of the ogl data set, Cholesky factorization was not a bottleneck for FISTA-p and ADAL here because we needed to compute it only once.

To simulate the situation where computing or caching $A^TA$ and its Cholesky factorization is not feasible, we switched ADAL and FISTA-p to PCG mode by always using PCG to solve the linear systems in the subproblems.  We compared the performance of ADAL, FISTA-p, and FISTA on the previous data set for both $l_1/l_2$ and $l_1/l_\infty$ models.  The results for ProxFlow are copied from from Figure \ref{fig:tony_plot} and Table \ref{tab:tony_pcg_l1inf} to serve as a reference.  We experimented with the fixed-value and the dynamic updating schemes for $\mu$ on all three algorithms.  From Figure \ref{fig:tony_pcg_plot}, it is clear that the performance of FISTA-p was significantly improved by using the dynamic scheme.  For ADAL, however, the dynamic scheme worked well only in the $l_1/l_2$ case, whereas the performance turned worse in general in the $l_1/l_\infty$ case.  We did not include the results for FISTA with the dynamic scheme because the solutions obtained were considerably more suboptimal than the ones obtained with the fixed-$\mu$ scheme.  Tables \ref{tab:tony_pcg_l12} and \ref{tab:tony_pcg_l1inf} report the best results of the algorithms in each case. The plots and numerical results show that FISTA-p compares favorably to ADAL and stays competitive to ProxFlow. In terms of the quality of the solutions, FISTA-p and ADAL also did a better job than FISTA, as evidenced in Table \ref{tab:tony_pcg_l1inf}.  On the other hand, the gap in CPU time between FISTA and the other three algorithms is less obvious.

\begin{figure}
%\begin{center}
    \hspace*{-0.0in}\includegraphics[width=1.0\textwidth]{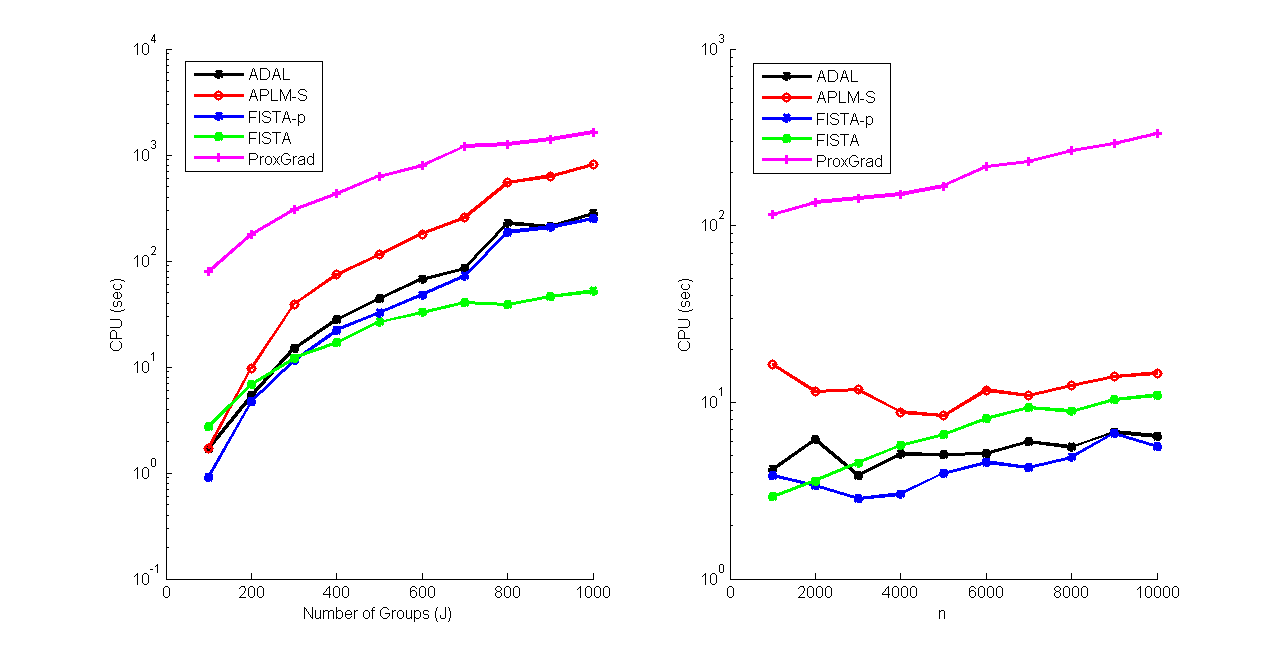}
    \caption{Scalability test results of the algorithms on the synthetic overlapping Group Lasso data sets from \cite{chen2010efficient}.  The scale of the $y$-axis is logarithmic. The dynamic scheme for $\mu$ was used for all algorithms except ProxGrad.}
    \label{fig:chen_plot}
%\end{center}
\end{figure}

%\begin{figure}
%    \hspace*{-0.1in}\includegraphics[width=0.50\textwidth]{5k_1k_chen_FISTAp_signals}
%    \hspace*{-0.1in}\includegraphics[width=0.50\textwidth]{10k_200_chen_FISTAp_signals}
%    \caption{Comparison of the ground truth and the solution recovered by FISTA-p.  The left column: $n = 5000, J = 1000$.  The right column: $n = 10000, J = 200$.}
%    \label{fig:chen_FISTAp_signals}
%\end{figure}

\begin{figure}
    \hspace*{-0.0in}\includegraphics[width=1.0\textwidth]{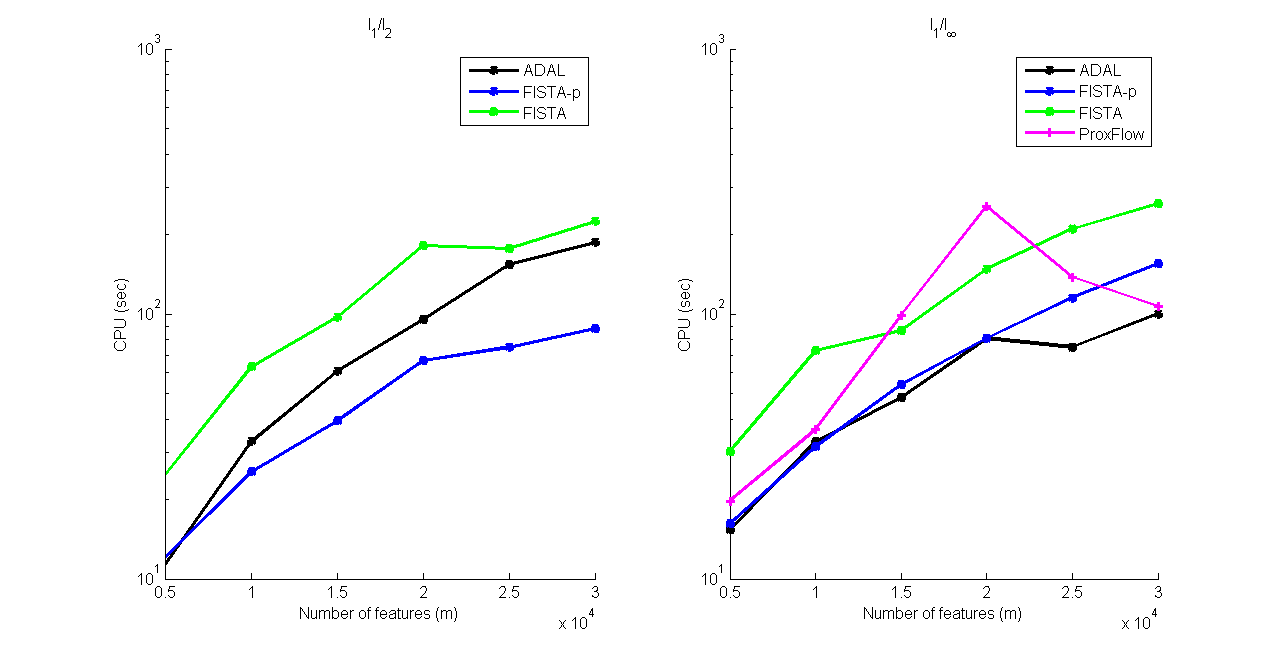}
    \caption{Scalability test results on the DCT set with $l_1/l_2$-regularization (left column) and $l_1/l_\infty$-regularization (right column).  The scale of the $y$-axis is logarithmic.  All of FISTA-p, FITSA, and ADAL were run with a fixed $\mu = \mu_0$.}
    \label{fig:tony_plot}
\end{figure}

\begin{figure}
    \hspace*{-0.0in}\includegraphics[width=1.0\textwidth]{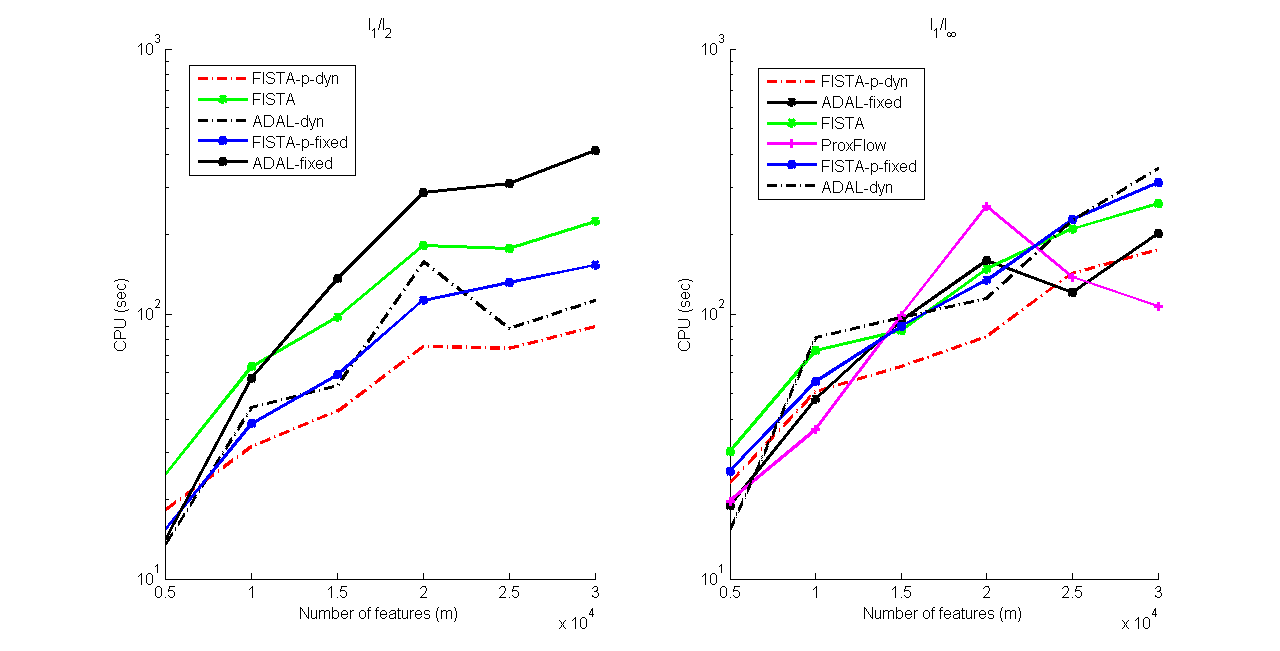}
    \caption{Scalability test results on the DCT set with $l_1/l_2$-regularization (left column) and $l_1/l_\infty$-regularization (right column).  The scale of the $y$-axis is logarithmic.  FISTA-p and ADAL are in PCG mode. The dotted lines denote the results obtained with the dynamic updating scheme for $\mu$. }
    \label{fig:tony_pcg_plot}
\end{figure}

\subsection{Real-world Examples}
To demonstrate the practical usefulness of our algorithms, we tested our algorithms on two real-world applications.

\subsubsection{Breast Cancer Gene Expressions}
We used the breast cancer data set \cite{van2002gene} with canonical pathways from MSigDB \cite{subramanian2005gene}.  The data was collected from 295 breast cancer tumor samples and contains gene expression measurements for 8,141 genes.  The goal was to select a small set of the most relevant genes that yield the best prediction performance.  A detailed description of the data set can be found in \cite{chen2010efficient, jacob2009group}.  In our experiment, we performed
%a binary classification task for predicting the occurrence of long-distance metastasis and
a regression task to predict the length of survival of the patients.  The canonical pathways naturally provide grouping information of the genes.  Hence, we used them as the groups for the group-structured regularization term $\Omega(\cdot)$.
%For the classification task, we use AugLag-FISTA, following the remarks in Section \ref{sec:fista_full}.

Table \ref{tab:breast_cancer_data} summarizes the data attributes.  The numerical results for the $l_1/l_2$-norm are collected in Table \ref{tab:breast_cancer_results}, which show that FISTA-p and ADAL were the fastest on this data set.  Again, we had to tune ADAL with different initial values $(\mu_0)$ and updating schemes of $\mu$ for speed and quality of the solution, and we eventually kept $\mu$ constant at 0.01.  The dynamic updating scheme for $\mu$ also did not work for FISTA, which returned a very suboptimal solution in this case.  We instead adopted a simple scheme of decreasing $\mu$ by half every 10 outer iterations.  Figure \ref{fig:obj_comp} graphically depicts the performance of the different algorithms.  In terms of the outer iterations, APLM-S behaved identically to FISTA-p, and FISTA also behaved similarly to ADAL.  However, APLM-S and FISTA were considerably slower due to larger numbers of inner iterations.  %FISTA was one of the better performing algorithms, albeit requiring more inner iterations, while ALM-S produced non-monotonic objective function values on the outer iterations.

We plot the root-mean-squared-error (RMSE) over different values of $\lambda$ (which lead to different numbers of active genes) in the left half of Figure \ref{fig:breast_cancer_plots}.  The training set consists of 200 randomly selected samples, and the RMSE was computed on the remaining 95 samples.  $l_1/l_2$-regularization achieved lower RMSE in this case.  However, $l_1/l_\infty$-regularization yielded better group sparsity as shown in Figure \ref{fig:bc_sparsities}.  The sets of active genes selected by the two models were very similar as illustrated in the right half of Figure \ref{fig:breast_cancer_plots}.  In general, the magnitudes of the coefficients returned by $l_1/l_\infty$-regularization tended to be similar within a group, whereas those returned by $l_1/l_2$-regularization did not follow that pattern.  This is because $l_1/l_\infty$-regularization penalizes only the maximum element, rather than all the coefficients in a group, resulting in many coefficients having the same magnitudes.  %It turns out that this feature is sometimes useful as we show in the next example.

\begin{table}
\begin{tabular}{|c|c|c|c|c|}
  \hline
  % after \\: \hline or \cline{col1-col2} \cline{col3-col4} ...
  Data sets & N (no. samples) & J (no. groups) & group size & average frequency  \\
  \hline
  BreastCancerData & 295 & 637 & 23.7 (avg) & 4 \\
  \hline
\end{tabular}
\caption{The Breast Cancer Dataset}
\label{tab:breast_cancer_data}
\end{table}

\begin{figure}
    \hspace*{-0.1in}\includegraphics[width=0.5\textwidth]{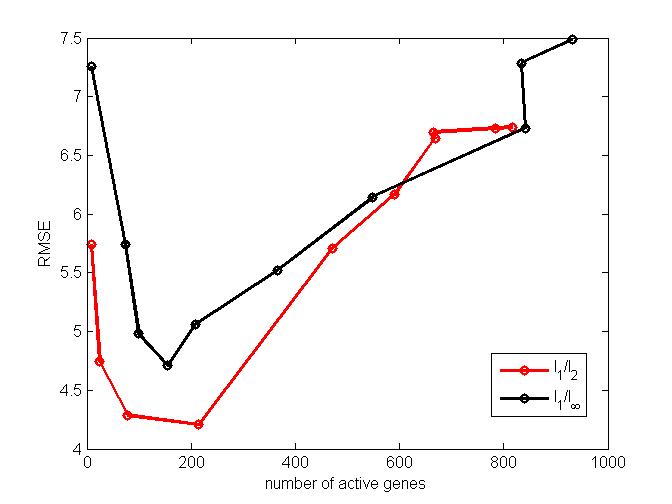}
    \hspace*{-0.1in}\includegraphics[width=0.5\textwidth]{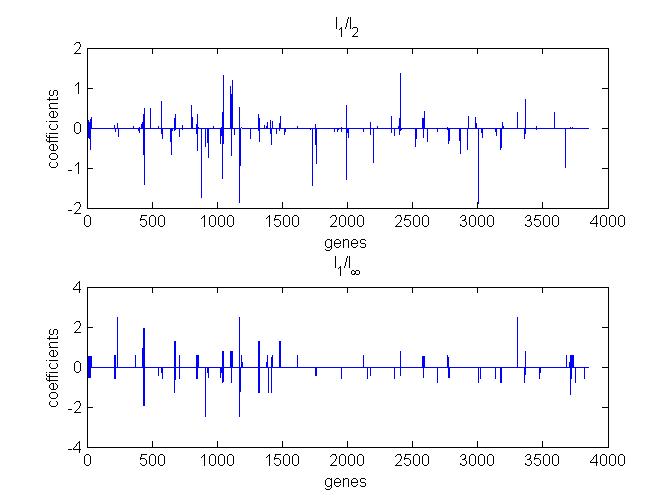}
    \caption{On the left: Plot of root-mean-squared-error against the number of active genes for the Breast Cancer data.  The plot is based on the regularization path for ten different values for $\lambda$.  The total CPU time (in Matlab) using FISTA-p was 51 seconds for $l_1/l_2$-regularization and 115 seconds for $l_1/l_\infty$-regularization.  On the right: The recovered sparse gene coefficients for predicting the length of the survival period. The value of $\lambda$ used here was the one minimizing the RMSE in the plot on the left.}
    \label{fig:breast_cancer_plots}
\end{figure}

\begin{figure}
    \hspace*{1.0in}\includegraphics[width=0.50\textwidth]{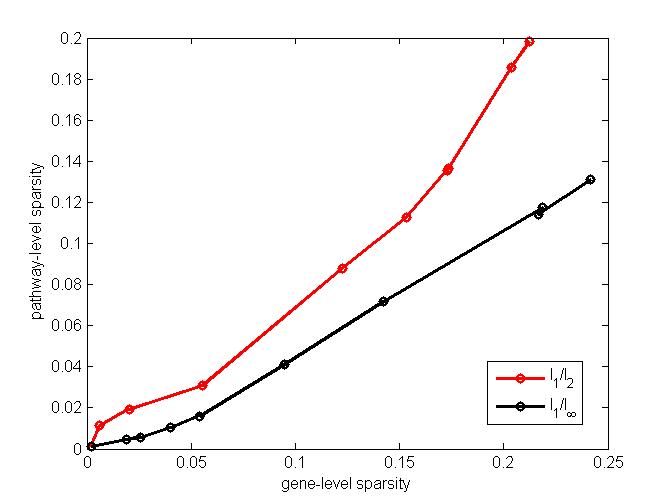}
    \caption{Pathway-level sparsity v.s. Gene-level sparsity.}
    \label{fig:bc_sparsities}
\end{figure}

\begin{figure}
    \hspace*{-0.1in}\includegraphics[width=0.50\textwidth]{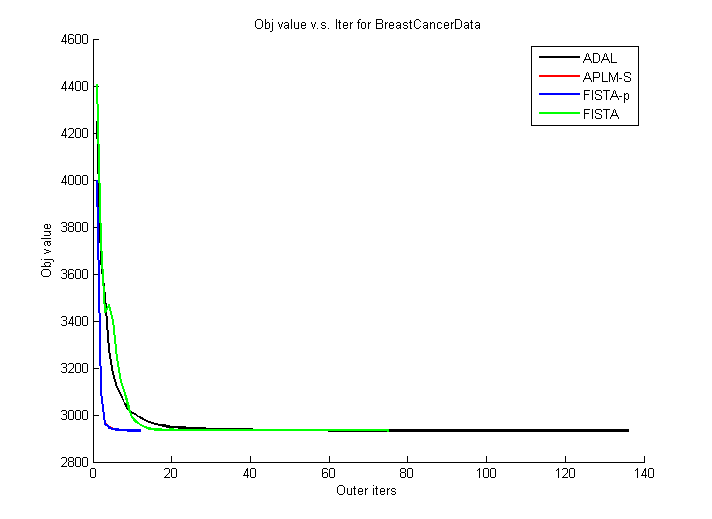}
    \hspace*{-0.1in}\includegraphics[width=0.50\textwidth]{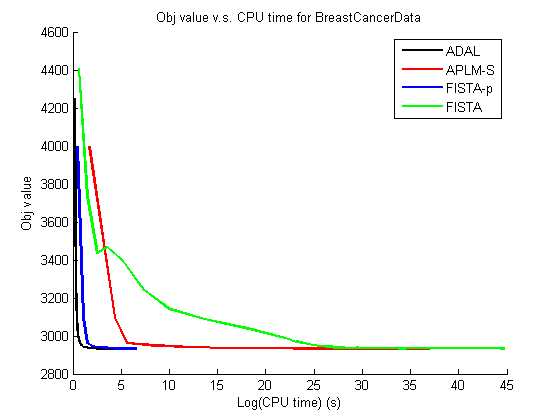}
    \caption{Objective values v.s. Outer iters and Objective values v.s. CPU time plots for the Breast Cancer data.  The results for ProxGrad are not plotted due to the different objective function that it minimizes.  The red (APLM-S) and blue (FISTA-p) lines overlap in the left column.}
    \label{fig:obj_comp}
\end{figure}

\subsubsection{Video Sequence Background Subtraction}
We next considered the video sequence background subtraction task from \cite{obozinski2010network,huang2009learning}.  The main objective here is to segment out foreground objects in an image (frame), given a sequence of $m$ frames from a fixed camera.  The data used in this experiment is available online \footnote{http://research.microsoft.com/en-us/um/people/jckrumm/wallflower/testimages.htm} \cite{toyama1999wallflower}.  The basic setup of the problem is as follows.  We represent each frame of $n$ pixels as a column vector $A_j \in \mathbb{R}^n$ and form the matrix $A \in \mathbb{R}^{n\times m}$ as $A \equiv \left(
                                                                                \begin{array}{cccc}
                                                                                  A_1 & A_2 & \cdots & A_m \\
                                                                                \end{array}
                                                                              \right)
$. The test frame is represented by $b \in \mathbb{R}^n$.  We model the relationship between $b$ and $A$ by $b \approx Ax + e$, where $x$ is assumed to be sparse, and $e$ is the 'noise' term which is also assumed to be sparse.  $Ax$ is thus a sparse linear combination of the video frame sequence and accounts for the background present in both $A$ and $b$.  $e$ contains the sparse foreground objects in $b$.  The basic model with $l_1$-regularization (Lasso) is
\begin{equation}\label{eq:video_L1}
    \min_{x,e} \frac{1}{2}\|Ax+e-b\|^2 + \lambda(\|x\|_1 + \|e\|_1).
\end{equation}
It has been shown in \cite{obozinski2010network} that we can significantly improve the quality of segmentation by applying a group-structured regularization $\Omega(\cdot)$ on $e$, where the groups are all the overlapping $k\times k$-square patches in the image.  Here, we set $k=3$.  The model thus becomes
\begin{equation}\label{eq:video_ogl}
    \min_{x,e} \frac{1}{2}\|Ax+e-b\|^2 + \lambda(\|x\|_1 + \|e\|_1 + \Omega(e)).
\end{equation}
Note that \eqref{eq:video_ogl} still fits into the group-sparse framework if we treat the $l_1$-regularization terms as the sum of the group norms, where the each groups consists of only one element.

We also considered an alternative model, where a Ridge regularization is applied to $x$ and an Elastic-Net penalty \cite{zou2005regularization} to $e$.  This model
\begin{equation}\label{eq:video_ridge_en}
    \min_{x,e} \frac{1}{2}\|Ax+e-b\|^2 + \lambda_1\|e\|_1 + \lambda_2(\|x\|^2 + \|e\|^2)
\end{equation}
does not yield a sparse $x$, but sparsity in $x$ is not a crucial factor here. It
is, however, well suited for our partial linearization methods (APLM-S and FISTA-p), since there is no need for the augmented Lagrangian framework.  Of course, we can also apply FISTA to solve \eqref{eq:video_ridge_en}.

We recovered the foreground objects by solving the above optimization problems and applying the sparsity pattern of $e$ as a mask for the original test frame.  A hand-segmented evaluation image from \cite{toyama1999wallflower} served as the ground truth.  The regularization parameters $\lambda, \lambda_1$, and $\lambda_2$ were selected in such a way that the recovered foreground objects matched the ground truth to the maximum extent.

FISTA-p was used to solve all three models.  The $l_1$ model \eqref{eq:video_L1} was treated as a special case of the group regularization model \eqref{eq:video_ogl}, with each group containing only one component of the feature vector. \footnote{We did not use the original version of FISTA to solve the model as an $l_1$-regularization problem because it took too long to converge in our experiments due to extremely small step sizes.}  For the Ridge/Elastic-Net penalty model, we applied FISTA-p directly without the outer augmented Lagrangian layer.

The solutions for the $l_1/l_2, l_1/l_\infty$, and Lasso models were not strictly sparse in the sense that those supposedly zero feature coefficients had non-zero (albeit extremely small) magnitudes, since we enforced the linear constraints $Cx = y$ through an augmented Lagrangian approach.  To obtain sparse solutions, we truncated the non-sparse solutions using thresholds ranging from $10^{-9}$ to $10^{-3}$ and selected the threshold that yielded the best accuracy.

Note that because of the additional feature vector $e$, the data matrix is effectively $\tilde{A} = \left(
                                                                        \begin{array}{cc}
                                                                          A & I_n \\
                                                                        \end{array}
                                                                      \right) \in \mathbb{R}^{n\times (m+n)}
$.  For solving \eqref{eq:video_ogl}, FISTA-p has to solve the linear system
\begin{equation}\label{eq:video_ls_struct}
    \left(
      \begin{array}{cc}
        A^TA+\frac{1}{\mu}D_x & A^T \\
        A & I_n+\frac{1}{\mu}D_e \\
      \end{array}
    \right)
    \left(
      \begin{array}{c}
        x \\
        e \\
      \end{array}
    \right)  =
    \left(
      \begin{array}{c}
        r_x \\
        r_e \\
      \end{array}
    \right),
\end{equation}
where $D$ is a diagonal matrix, and $D_x, D_e, r_x, r_e$ are the components of $D$ and $r$ corresponding to $x$ and $e$ respectively.
In this example, $n$ is much larger than $m$, e.g. $n = 57600, m = 200$.  To avoid solving a system of size $n\times n$, we took the Schur complement of $I_n + \frac{1}{\mu}D_e$ and solved instead the positive definite $m\times m$ system
\begin{eqnarray}
    \left( A^TA + \frac{1}{\mu}D_x - A^T(I+\frac{1}{\mu}D_e)^{-1}A \right)x &=& r_x - A^T(I+\frac{1}{\mu}D_e)^{-1}r_e, \label{eq:video_ls_small} \\
    e &=& diag(\mathbf{1} + \frac{1}{\mu}D_e)^{-1}(r_e - Ax).
\end{eqnarray}

The $l_1/l_\infty$ model yielded the best background separation accuracy (marginally better than the $l_1/l_2$ model), but it also was the most computationally expensive. (See Table \ref{tab:bg_subt_results} and Figure \ref{fig:bg_subt_figs}.)  Although the Ridge/Elastic-Net model yielded as poor separation results as the Lasso ($l_1$) model, it was orders of magnitude faster to solve using FISTA-p.  We again observed that the dynamic scheme for $\mu$ worked better for FISTA-p than for ADAL.  For a constant $\mu$ over the entire run, ADAL took at least twice as long as FISTA-p to produce a solution of the same quality.  A typical run of FISTA-p on this problem with the best selected $\lambda$ took less than 10 outer iterations.  On the other hand, ADAL took more than 500 iterations to meet the stopping criteria.

%We observed that for both the $l_1/l_2$ and $l_1/l_\infty$ models, if we decreased the augmented Lagrangian penalty parameter $\mu$ frequently, e.g. $K_\mu = 20$, it was crucial to solve the inner-iterations accurately (by setting the tolerance smaller) in order for APLM-S and FISTA-p to converge to the correct solution.  Consequently, ADAL did not work well on this example unless we kept $\mu$ constant for many iterations before decreasing it.  We tried keeping $\mu$ constant for ADAL over the entire run, and ADAL took at least twice as long as FISTA-p to produce a solution of the same quality.  A typical run of FISTA-p on this problem with the best selected $\lambda$ took less than 10 outer iterations.  Most of the inner iterations (less than 200) occured in the first two outer iterations.  On the other hand, ADAL took more than 500 iterations to meet the stopping criteria.  The reason ADAL (with the original configuration) did not have trouble with the ogl data sets is that those data sets are easy enough so that alternatingly optimizing over $x$ and $y$ once already yields a fairly accurate solution in Line \ref{line:solve_auglag} of Algorithm \ref{alg:aug_lag}.  This is further confirmed by the fact that ADAL required the same number of (outer) iterations as FISTA-p and APLM-S for those problems.

\begin{figure}
    \hspace*{-0.0in}\includegraphics[width=1.0\textwidth]{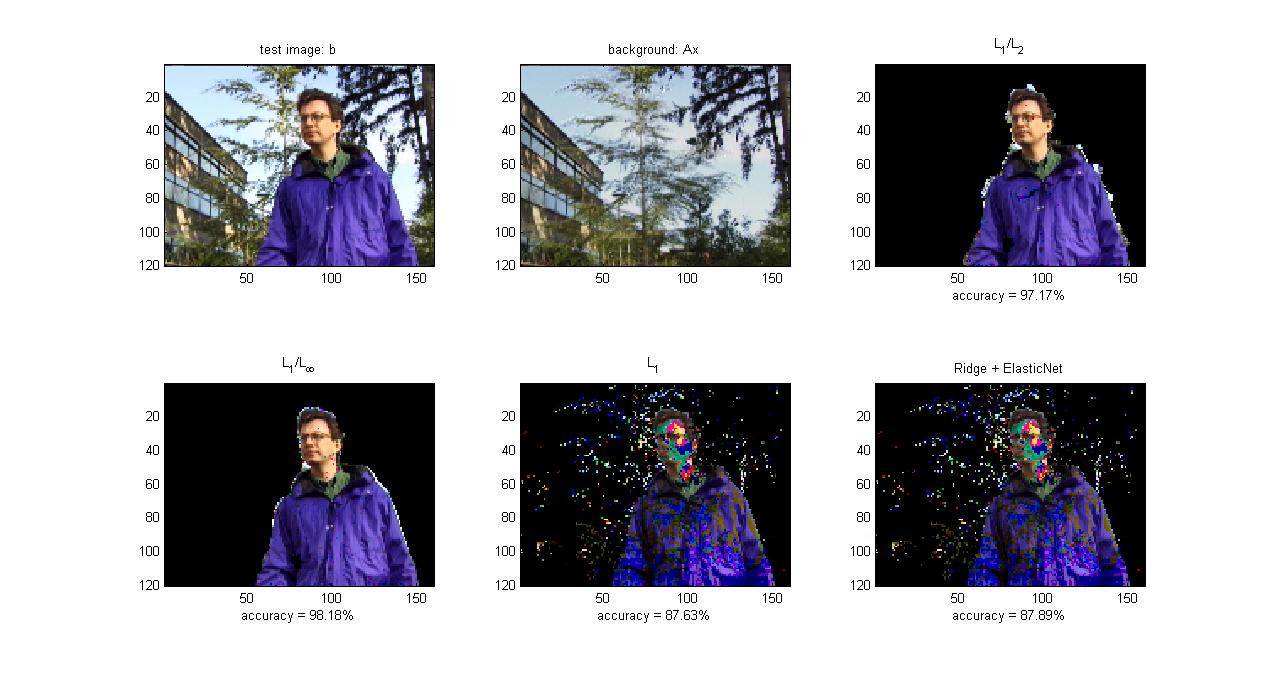}
    \caption{Separation results for the video sequence background substraction example.  Each training image had $120\times 160$ RGB pixels.  The training set contained 200 images in sequence.  The accuracy indicated for each of the different models is the percentage of pixels that matched the ground truth.}
    \label{fig:bg_subt_figs}
\end{figure}

\begin{table}
\begin{tabular}{|c|c|c|c|}
\hline\hline
Model & Accuracy (percent) & Total CPU time (s) & No. parameter values on reg path\\
\hline
% $l_1/l_2$& 	 97.17& 	 2.53e+003& 	 8\\
$l_1/l_2$& 	 97.17& 	 2.48e+003& 	 8\\
\hline
%$l_1/l_\infty$& 	 98.18& 	 4.16e+003& 	 6\\
$l_1/l_\infty$& 	 98.18& 	 4.07e+003& 	 6\\
\hline
     $l_1$& 	 87.63& 	 1.61e+003& 	 11\\
\hline
ridge + elastic net& 	 87.89& 	 1.82e+002& 	 64\\
\hline\hline
\end{tabular}
\caption{Computational results for the video sequence background subtraction example.  The algorithm used is FISTA-p.  We used the Matlab version for the ease of generating the images.  The C++ version runs at least four times faster from our experience in the previous experiments. We report the best accuracy found on the regularization path of each model.  The total CPU time is recorded for computing the entire regularization path, with the specified number of different regularization parameter values.}
\label{tab:bg_subt_results}
\end{table}

\subsection{Comments on Results}
The computational results exhibit two general patterns.  First, the simpler algorithms (FISTA-p and ADAL) were significantly faster than the more general algorithms, such as APLM-S.  Interestingly, the majority of the APLM-S inner iterations consisted of a skipping step for the tests on synthetic data and the breast cancer data, which means that APLM-S essentially behaved like ISTA-p in these cases.  Indeed, FISTA-p generally required the same number of outer-iterations as APLM-S but much fewer inner-iterations, as predicted by theory.  In addition, no computational steps were wasted and no function evaluations were required for FISTA-p and ADAL.  Second, FISTA-p converged faster (required less iterations) than its full-linearization counterpart FISTA.  We have suggested possible reasons for this in Section \ref{sec:partial_linearization}.  On the other hand, FISTA was very effective for data both of whose dimensions were large because it required only gradient computations and soft-thresholding operations, and did not require linear systems to be solved.

%the partial-linearization algorithm (FISTA-p) converged faster than their full-linearization/split counterparts.  We have suggested possible reasons for this in Section \ref{sec:partial_linearization}.  Second, FISTA-p ran considerably faster than APLM-S.

Our experiments showed that the performance of ADAL (as well as the quality of the solution that it returned) varied a lot as a function of the parameter settings, and it was tricky to tune them optimally.  In contrast, FISTA-p exhibited fairly stable performance for a simple set of parameters that we rarely had to alter and in general performed better than ADAL.

It may seem straight-forward to apply FISTA directly to the Lasso problem \eqref{eq:video_L1} without the augmented Lagrangian framework.\footnote{To avoid confusion with our algorithms that consist of inner-outer iterations, we prefix our algorithms with `AugLag' here.}  However, as we have seen in our experiments, FISTA took much longer than AugLag-FISTA-p to solve this problem.  We believe that this is further evidence of the `load-balancing' property of the latter algorithm that we discussed in Section \ref{sec:alms_partial2}.  It also demonstrates the versatility of our approach to regularized learning problems.
%Note that for $l_1$-regularization, the matrix $C$ in Algorithm \ref{alg:aug_lag} is the identity.  In that case, the Lagrange multiplier $v$ is equal to $\nabla l(x)$, and applying AugLag-ADAL is equivalent to using ALM-S disregarding the skipping condition.  We recall from Section \ref{alg:ista_partial} that ISTA-p is

\section{Conclusion}
We have built a unified framework for solving sparse learning problems involving group-structured regularization, in particular, the $l_1/l_2$- or $l_1/l_\infty$-regularization of arbitrarily overlapping groups of variables.  For the key building-block of this framework, we developed new efficient algorithms based on alternating partial-linearization/splitting, with proven convergence rates.  In addition, we have also incorporated ADAL and FISTA into our framework.  Computational tests on several sets of synthetic test data demonstrated the relative strength of the algorithms, and through two real-world applications we compared the relative merits of these structured sparsity-inducing norms.  Among the algorithms studied, FISTA-p and ADAL performed the best on most of the data sets, and FISTA appeared to be a good alternative choice for large-scale data.  From our experience, FISTA-p is easier to configure and is more robust to variations in the algorithm parameters.  Together, they form a flexible and versatile suite of methods for group-sparse problems of different sizes.

\section{Acknowledgement}
We would like to thank Katya Scheinberg and Shiqian Ma for many helpful discussions, and Xi Chen for providing the Matlab code for ProxGrad.

\appendix
\section{Proof of Lemma \ref{lem:lem22x}}\label{sec:pf_lemma}
\begin{eqnarray}
% \nonumber to remove numbering (before each equation)
  \nonumber F(\xbar,\ybar) - F(x,\pfxy) &\geq& F(\xbar,\ybar) - \Lrho(x,y,\pfxy,\nabla_y f(x,y)) \\
  \nonumber &=& F(\xbar,\ybar) - \bigg( f(x,y) + \gradyfxy^T(\pfxy - y) \\
   && + \frac{1}{2\rho}\|\pfxy - y\|^2 + g(\pfxy)  \bigg). \label{eq:207x}
\end{eqnarray}
From the optimality of $\pfxy$, we also have
\begin{equation}\label{eq:210x}
    \gamma_g(\pfxy) + \gradyfxy + \frac{1}{\rho}(\pfxy - y) = 0.
\end{equation}
Since $F(x,y) = f(x,y) + g(y)$, and $f$ and $g$ are convex functions, for any $(\xbar, \ybar)$,
%\begin{eqnarray}
%% \nonumber to remove numbering (before each equation)
%  \gybar
%%&\geq& g(\pfxy) + \left( \left(
%%                                           \begin{array}{c}
%%                                             \xbar \\
%%                                             \ybar \\
%%                                           \end{array}
%%                                         \right) - \pfxy
%%   \right)^T\gamma_g(\pfxy) \\
%   &\geq& g(\pfxy) + (\ybar - \pfxy)^T\gamma_g(\pfxy), \label{eq:208x}\\
%  f(\xbar,\ybar) &\geq& f(x,y) + (\ybar - y)^T\gradyfxy + (\xbar - x)^T\gradxfxy. \label{eq:209x}
%\end{eqnarray}
%%The equality of \eqref{eq:208x} is due to the fact that $\gamma_g(\pfxy)_x = 0$.
%Summing \eqref{eq:208x} and \eqref{eq:209x} gives
\begin{equation}\label{eq:211x}
    F(\xbar,\ybar) \geq g(\pfxy) + (\ybar - \pfxy)^T\gamma_g(\pfxy) + f(x,y) + (\ybar - y)^T\gradyfxy + (\xbar - x)^T\gradxfxy.
\end{equation}
Therefore, from \eqref{eq:207x}, \eqref{eq:210x}, and \eqref{eq:211x}, it follows that
\begin{eqnarray}
% \nonumber to remove numbering (before each equation)
  \nonumber F(\xbar,\ybar) - F(x,\pfxy) &\geq& g(\pfxy) + (\ybar - \pfxy)^T\gamma_g(\pfxy) + f(x,y) + (\ybar - y)^T\gradyfxy + (\xbar - x)^T\gradxfxy \\
  \nonumber && - \left( f(x,y) + \gradyfxy^T(\pfxy - y) + \frac{1}{2\rho}\|\pfxy - y\|^2 + g(\pfxy)  \right) \\
  \nonumber &=& (\ybar - \pfxy)^T(\gamma_g(\pfxy) + \gradyfxy) - \frac{1}{2\rho}\|\pfxy - y\|^2 + (\xbar-x)^T\gradxfxy \\
  \nonumber &=& (\ybar - \pfxy)^T\left( -\frac{1}{\rho}(\pfxy - y) \right) - \frac{1}{2\rho}\|\pfxy - y\|^2 + (\xbar-x)^T\gradxfxy \\
  &=& \frac{1}{2\rho}(\|\pfxy - \ybar\|^2 - \|y-\ybar\|^2) + (\xbar-x)^T\gradxfxy. \label{eq:212x}
\end{eqnarray}
%Alternatively, it follows from Lemma 2.3 in \cite{beckteboulle} that
%\begin{equation*}
%    2\rho( F(x,\ybar) - F(x,\pfxy) ) \geq \|\pfxy - \ybar\|^2 - \|\ybar-y\|^2,
%\end{equation*}
%by observing that $x$ basically plays the role of a given parameter.  By the convexity of $F$ with respect to $x$,
%\begin{equation*}
%    F(\xbar,\ybar) \geq F(x,\ybar) + (\xbar-x)^T\nabla_x f(x,y)
%\end{equation*}
The proof for the second part of the lemma is very similar, but we give it for completeness.
\begin{equation}\label{eq:207xb}
    F(x,y) - F(\pgybar) \geq F(x,y) - \left( f(\pgybar) + \gybar + \gammagxbary^T(\pgybar_y - \ybar) + \frac{1}{2\rho}\|\pgybar_y - \ybar\|^2  \right)
\end{equation}
By the optimality of $\pgybar$, we have
\begin{eqnarray}
% \nonumber to remove numbering (before each equation)
  \nabla_x f(\pgybar) &=& 0, \label{eq:210xb1}\\
  \nabla_{y} f(\pgybar) + \gamma_g(\ybar) + \frac{1}{\rho}(\pgybar_y - \ybar) &=& 0. \label{eq:210xb2}
\end{eqnarray}
Since $F(x,y) = f(x,y) + g(y)$, it follows from the convexity of both $f$ and $g$ and \eqref{eq:210xb1} that
%\begin{eqnarray}
%% \nonumber to remove numbering (before each equation)
%  \nonumber g(y)
%%&\geq& \gybar + (y-\ybar)^T\gammagxbary_{\ybar} + (x-\xbar)^T\gammagxbary_x \\
%   &\geq& \gybar + (y-\ybar)^T\gamma_g(\ybar), \label{eq:208xb} \\
%   f(x,y) &\geq& f(\pgybar) + (y - \pgybar_y)^T\nabla_{y}f(\pgybar) + (x - \pgybar_x)^T\nabla_xf(\pgybar) \\
%   &=& f(\pgybar) + (y - \pgybar_y)^T\nabla_{y}f(\pgybar), \label{eq:209xb}
%\end{eqnarray}
%where the equality is due to \eqref{eq:210xb1}.
%Summing \eqref{eq:208xb} and \eqref{eq:209xb} gives
\begin{equation}\label{eq:211xb}
    F(x,y) \geq \gybar + (y - \ybar)^T\gammagxbary + f(\pgybar) + (y - \pgybar_y)^T\nabla_{y}f(\pgybar).
\end{equation}
Now combining \eqref{eq:207xb}, \eqref{eq:210xb2}, and \eqref{eq:211xb}, it follows that
\begin{eqnarray}
% \nonumber to remove numbering (before each equation)
  \nonumber F(x,y) - F(\pgybar) &\geq& (y - \pgybar_y)^T(\gammagxbary_{\ybar} + \nabla_{y}f(\pgybar)) - \frac{1}{2\rho}\|\pgybar_y - \ybar\|^2 \\
  \nonumber &=& (y - \pgybar_y)^T\left( \frac{1}{\rho}(\ybar - \pgybar_y)  \right) - \frac{1}{2\rho}\|\pgybar_y - \ybar\|^2 \\
   &=& \frac{1}{2\rho}(\|\pgybar_y - y\|^2 - \|y - \ybar\|^2). \label{eq:212xb}
\end{eqnarray}

\section{Proof of Theorem \ref{thm:alms_partial_complexity}}\label{sec:pf_thm}
Let $I$ be the set of all regular iteration indices among the first $k-1$ iterations, and let $I_c$ be its complement.  For all $n \in I_c, y^{n+1}=\ybar^n$.

For $n \in I$, we can apply Lemma \ref{lem:lem22x} since \eqref{eq:cond1_lem22ybar} automatically holds, and  \eqref{eq:cond1_lem22x} holds when $\rho \leq \frac{1}{L(f)}$.  In \eqref{eq:lem22B}, by letting $(x,y) = (x^*, y^*)$, and $\ybar = \ybar^n$, we get $\pgybar = (x^{n+1}, y^{n+1})$, and
\begin{equation}\label{eq:215x}
    2\rho(F(x^*,y^*) - F(x^{n+1}, y^{n+1})) \geq \|y^{n+1}-y^*\|^2 - \|\ybar^n-y^*\|^2.
\end{equation}
In \eqref{eq:lem22A}, by letting $(\xbar, \ybar) = (x^*, y^*), (x,y) = (x^{n+1}, y^{n+1})$, we get $\pfxy = \ybarnpone$ and
\begin{eqnarray}\label{eq:214}
    \nonumber 2\rho(F(x^*, y^*) - F(\xnpone, \ybarnpone) ) &\geq& \|\ybarnpone - y^*\|^2 - \|\ynpone - y^*\|^2 + (x^* - \xnpone)^T\nabla_x f(\xnpone, \ynpone) \\
    &=& \|\ybarnpone - y^*\|^2 - \|\ynpone - y^*\|^2. \label{eq:214x},
\end{eqnarray}
since $\nabla_x f(\xnpone, \ynpone) = 0$, for $n \in I$ by \eqref{eq:210xb1} and for $n \in I_c$ by \eqref{eq:x_skip_opt}.
%\begin{equation}\label{eq:gradx_f_zero}
%    \nabla_x f(\xnpone, \ynpone) = \left\{
%                                        \begin{array}{ll}
%                                          \nabla_x f(p_g(\ybar^n)) = 0, & \hbox{$n \in I$;} \\
%                                          0 \quad [\textrm{by }\eqref{eq:x_skip_opt}], & \hbox{$n \in I_c$.}
%                                        \end{array}
%                                      \right.
%\end{equation}
Adding \eqref{eq:214x} to \eqref{eq:215x}, we get
\begin{equation}\label{eq:216x}
    2\rho( 2F(x^*,y^*) - F(\xnpone,\ynpone) - F(\xnpone,\ybarnpone) ) \geq \|\ybarnpone - y^*\|^2 -\|\ybar^n - y^*\|^2.
\end{equation}
For $n \in I_c$, since $\nabla_x f(\xnpone, \ynpone) = 0$, we have that \eqref{eq:214x} holds.  Since $\ynpone = \ybar^n$, it follows that
\begin{equation}\label{eq:217x}
    2\rho(F(x^*,y^*) - F(\xnpone,\ybarnpone)) \geq \|\ybarnpone - y^*\|^2 - \|\ybar^n - y^*\|^2.
\end{equation}
Summing \eqref{eq:216x} and \eqref{eq:217x} over $n = 0,1,\ldots,k-1$ and observing that $2|I|+|I_c| = k+k_n$, we obtain
\begin{eqnarray}\label{eq:218x}
% \nonumber to remove numbering (before each equation)
  && 2\rho\left( (k+k_n)F(x^*,y^*) - \sum_{n=1}^{k-1}F(\xnpone,\ybarnpone) - \sum_{n\in I}F(\xnpone, \ynpone) \right) \\
  \nonumber &\geq& \sum_{n=0}^{k-1}(\|\ybarnpone - y^*\|^2 - \|\ybar^n - y^*\|^2 ) \\
  \nonumber &=& \|\ybar^k - y^*\|^2 - \|\ybar^0 - y^*\|^2 \\
  \nonumber &\geq& -\|\ybar^0 - y^*\|^2.
\end{eqnarray}
In Lemma \ref{lem:lem22x}, by letting $(\xbar,\ybar) = (\xnpone,\ynpone)$ in \eqref{eq:lem22A} instead of $(x^*,y^*)$, we have from \eqref{eq:214x} that
\begin{equation}\label{eq:219x}
    2\rho( F(\xnpone,\ynpone) - F(\xnpone,\ybarnpone) ) \geq \|\ybarnpone - \ynpone\|^2 \geq 0. %\quad n \in I,
    %2\rho( F(\xnpone,\ybarnpone) - F(\xbarnpone,\ybarnpone) ) &\geq& \|\ybarnpone - \ybarnpone\|^2 + (\xnpone - \xnpone)^T\nabla_xf(\xnpone,y^n) \geq 0, \quad n \in I_c, \\
\end{equation}
%or equivalently,
%\begin{equation}\label{eq:220x}
%    2\rho( F(x^n, y^n) - F(x^n,\ybar^n) ) \geq \|\ybar^n - y^n\|^2 \geq 0.
%\end{equation}
%Hence, $F(x^n,\ybar^n) \leq F(x^n,y^n) \forall n$.

Similarly, for $n\in I$, if we let $(x,y) = (x^n, \ybar^n)$ instead of $(x^*,y^*)$ in \eqref{eq:215x}, we have
\begin{equation}\label{eq:221x}
    2\rho(F(x^n,\ybar^n) - F(\xnpone,\ynpone)) \geq \|\ynpone - \ybar^n\|^2 \geq 0.
\end{equation}
For $n \in I_c$, $\ynpone = \ybar^n$;
%  Thus,
%\begin{equation}\label{eq:221xIc}
%  F(x^n, \ybar^n) - F(\xnpone, \ynpone) = F(x^n,\ybar^n) - F(\xnpone,\ybar^n).
%\end{equation}
from \eqref{eq:x_skip_opt}, since $\xnpone = \arg\min_x F(x,y)$ with $y=\ybar^n=\ynpone$,
%$F(x^n, \ybar^n) - F(\xnpone, \ynpone) \geq 0$ in \eqref{eq:221xIc}.  Hence,
\begin{equation}\label{eq:221xx}
    2\rho(F(x^n,\ybar^n) - F(\xnpone,\ynpone)) \geq 0.
\end{equation}
%holds for all $n$.
Hence, from  \eqref{eq:219x} and \eqref{eq:221x} to \eqref{eq:221xx},
%respectively yields for all $n$
%\begin{eqnarray}\label{eq:222x}
%% \nonumber to remove numbering (before each equation)
%  2\rho( F(x^n,\ybar^n) - F(\xnpone,\ybarnpone) ) &\geq& 0, \\
%  \nonumber 2\rho( F(x^n,y^n) - F(\xnpone,\ynpone) ) &\geq& 0.
%\end{eqnarray}
%The above two inequalities show that the sequences of function values $F(x^n,\ybar^n)$ and $F(x^n,y^n)$ are non-increasing.
$F(x^n,y^n) \geq F(x^n,\ybar^n) \geq F(\xnpone, \ynpone) \geq F(\xnpone,\ybarnpone)$.
Then, we have
\begin{equation}\label{eq:223x}
% \nonumber to remove numbering (before each equation)
  \sum_{n=0}^{k-1}F(\xnpone,\ybarnpone) \geq k F(x^k,\ybar^k), \textrm{and} \quad \sum_{n\in I}F(\xnpone,\ynpone) \geq k_n F(x^k, y^k).
\end{equation}
Combining \eqref{eq:218x} and \eqref{eq:223x} yields
%\begin{equation}\label{eq:224x}
%    2\rho( (k+k_n)F(x^*,y^*) - k F(x^k,\ybar^k) - k_n F(x^k,y^k) ) \geq -\|\ybar^0 - y^*\|^2.
%\end{equation}
%Since $F(x^k,\ybar^k) \leq F(x^k,y^k)$,
$2\rho (k+k_n)(F(x^*,y^*) - F(x^k,\ybar^k) ) \geq -\|\ybar^0 - y^*\|^2$.

\section{Derivation of the Stopping Criteria}\label{sec:stopping_criteria}
In this section, we show that the quantities that we use in our stopping criteria correspond to the primal and dual residuals \cite{boyd2010distributed} for the outer iterations and the gradient residuals for the inner iterations.  We first consider the inner iterations.
\begin{description}
  \item[FISTA-p] The necessary and sufficient optimality conditions for problem \eqref{eq:aug_lag_split} or \eqref{eq:aug_lag_split_simplified} are primal feasibility
\begin{equation}\label{eq:inner_partial_pfeas}
    \ybar^* - y^* = 0,
\end{equation}
and vanishing of the gradient of the objective function at $(x^*,\ybar^*)$, i.e.
\begin{eqnarray}
% \nonumber to remove numbering (before each equation)
  0 &=& \nabla_x f(x^*,\ybar^*), \label{eq:inner_partial_xopt}\\
  0 &\in&  \nabla_y f(x^*,\ybar^*) + \partial g(\ybar^*). \label{eq:inner_partial_yopt}
\end{eqnarray}
Since $y^{k+1} = z^k$, the primal residual is thus $\ybar^{k+1} - y^{k+1} = \ybar^{k+1} - z^k$.
It follows from the optimality of $\xkpone$ in Line \ref{line:fistap_x} of Algorithm \ref{alg:fista_partial} that
\begin{eqnarray*}
  A^T(A\xkpone - b) - C^Tv^l + \frac{1}{\mu}C^T(C\xkpone - \ybarkpone) + \frac{1}{\mu}C^T(\ybarkpone - z^k) &=& 0 \\
  \nabla_x f(\xkpone,\ybarkpone) &=& \frac{1}{\mu}C^T(z^k - \ybarkpone).
\end{eqnarray*}
Similarly, from the optimality of $\ybarkpone$ in Line \ref{line:fistap_y}, we have that
\begin{eqnarray*}
% \nonumber to remove numbering (before each equation)
  0 &\in& \partial g(\ybarkpone) + \nabla_y f(\xkpone,z^k) + \frac{1}{\rho}(\ybarkpone - z^k) \\
  &=& \partial g(\ybarkpone) + \nabla_y f(\xkpone,\ybarkpone) - \frac{1}{\mu}(\ybarkpone - z^k) + \frac{1}{\rho}(\ybarkpone - z^k) \\
  &=&  \partial g(\ybarkpone) + \nabla_y f(\xkpone,\ybarkpone),
\end{eqnarray*}
where the last step follows from $\mu = \rho$.  Hence, we see that $\frac{1}{\mu}C^T(z^k - \ybarkpone)$ is the gradient residual corresponding to \eqref{eq:inner_partial_xopt}, while \eqref{eq:inner_partial_yopt} is satisfied in every inner iteration.

  \item[APLM-S] The primal residual is $\ybarkpone - \ykpone$ from \eqref{eq:inner_partial_pfeas}.  Following the derivation for FISTA-p, it is not hard to verify that \eqref{eq:inner_partial_yopt} is always satisfied, and the gradient residual corresponding to \eqref{eq:inner_partial_xopt} is $\frac{1}{\mu}C^T(\ykpone - \ybarkpone)$.
%  The dual feasibility conditions are
%\begin{eqnarray}
%% \nonumber to remove numbering (before each equation)
%  0 &=& \nabla_y f(x^*,y^*) - \gamma^*, \label{eq:inner_p_dual_feas1}\\
%  0 &\in& \partial g(x^*,\ybar^*) + \gamma^*. \label{eq:inner_p_dual_feas2}
%\end{eqnarray}
%Consider a regular iteration. By the definition of $\gamma^{k+1}$ for APLM-S, we have that
%\begin{equation}
%  \frac{1}{\rho}(\ykpone - \ybarkpone) = \nabla_y f(\xkpone,\ykpone) - \gamma^{k+1}.
%\end{equation}
%Hence, $\frac{1}{\rho}(\ykpone - \ybarkpone)$ is the dual residual corresponding to \eqref{eq:inner_p_dual_feas1}.  By the optimality of $\ybarkpone$ in Line \ref{line:almsp_ybar} of Algorithm \ref{alg:alms_partial2}, we have that
%\begin{eqnarray*}
%% \nonumber to remove numbering (before each equation)
%  0 &\in& \partial g(\ybarkpone) + \nabla_y f(\xkpone,\ykpone) + \frac{1}{\rho}(\ybarkpone - \ykpone) \\
%  0 &\in& \partial g(\ybarkpone) + \gamma^{k+1},
%\end{eqnarray*}
%which means that the dual feasibility \eqref{eq:inner_p_dual_feas2} is always satisfied.
%
%Now, consider a skipping iteration.  Here, $\ykpone = \ybar^k$, but we observe that the above derivation still holds.

  \item[FISTA] Similar to FISTA-p, the necessary and sufficient optimality conditions for problem \eqref{eq:aug_lag_fullsplit} are primal feasibility
\begin{equation*}
    (x^*,y^*) = (\xbar^*,\ybar^*),
\end{equation*}
and vanishing of the objective gradient at $(\xbar^*,\ybar^*)$,
\begin{eqnarray*}
% \nonumber to remove numbering (before each equation)
  0 &=& \nabla_x f(\xbar^*,\ybar^*), \\
  0 &\in&  \nabla_y f(\xbar^*,\ybar^*) + \partial g(\ybar^*).
\end{eqnarray*}
Clearly, the primal residual is $(\xbarkpone-z_x^k, \ybarkpone - z_y^k)$ since $(\xkpone,\ykpone) \equiv (z_x^k, z_y^k)$.  From the optimality of $(\xbarkpone,\ybarkpone)$, it follows that
\begin{eqnarray*}
% \nonumber to remove numbering (before each equation)
  0 &=& \nabla_x f(z_x^k,z_y^k) + \frac{1}{\rho}(\xbarkpone - z_x^k), \\
  0 &\in& \partial g(\ybarkpone) + \nabla_y f(z_x^k,z_y^k) + \frac{1}{\rho}(\ybarkpone - z_y^k).
\end{eqnarray*}
%The exact gradient residual is relatively expensive to compute because it involves two matrix-vector multiplications with $A$ and $A^T$.
Here, we simply use $\frac{1}{\rho}(\xbarkpone - z_x^k)$ and $\frac{1}{\rho}(\ybarkpone - z_y^k)$ to approximate the gradient residuals.
\end{description}

Next, we consider the outer iterations.  The necessary and sufficient optimality conditions for problem \eqref{eq:overlap_glasso_aug1} are primal feasibility
\begin{equation}\label{eq:outer_primal_feas}
    Cx^* - y^* = 0,
\end{equation}
and dual feasibility
\begin{eqnarray}
% \nonumber to remove numbering (before each equation)
  0 &=& \nabla L(x^*) - C^Tv^* \label{eq:outer_dual_feas1}\\
  0 &\in& \partial \tilde{\Omega}(y^*) + v^*. \label{eq:outer_dual_feas2}
\end{eqnarray}
Clearly, the primal residual is $r^l = Cx^l - y^l$.  The dual residual is $\left(
                                                                             \begin{array}{c}
                                                                               \nabla L(x^{l+1}) - C^T(v^l - \frac{1}{\mu}(Cx^{l+1}-\ybar^{l+1})) \\
                                                                               \partial\tilde{\Omega}(y^{l+1}) + v^l - \frac{1}{\mu}(Cx^{l+1}-\ybar^{l+1}) \\
                                                                             \end{array}
                                                                           \right)
$, recalling that $v^{l+1} = v^l - \frac{1}{\mu}(Cx^{l+1}-\ybar^{l+1})$.  The above is simply the gradient of the augmented Lagrangian \eqref{eq:aug_lag} evaluated at $(x^l,y^l,v^l)$.  Now, since the objective function of an inner iteration is the augmented Lagrangian with $v = v^l$, the dual residual for an outer iteration is readily available from the gradient residual computed for the last inner iteration of the outer iteration.

\section{Numerical Results}
\begin{table}
\begin{center}
\begin{tabular}{|c|c|c|c|c|c|}
\hline\hline
Data Sets & Algs & CPU (s) & Iters & Avg Sub-iters &$F(x)$\\
\hline
\multirow{4}{*}{ogl-5000-100-10-3}
&      ADAL& 	 1.70e+000& 	 61& 	 1.00e+000& 	 1.9482e+005\\
&    APLM-S& 	 1.71e+000& 	 8& 	 4.88e+000& 	 1.9482e+005\\
&   FISTA-p& 	 9.08e-001& 	 8& 	 4.38e+000& 	 1.9482e+005\\
&     FISTA& 	 2.74e+000& 	 10& 	 7.30e+000& 	 1.9482e+005\\
&  ProxGrad& 	 7.92e+001& 	 3858& 	 -& 	 -\\
\hline
\multirow{4}{*}{ogl-5000-600-10-3}
&      ADAL& 	 6.75e+001& 	 105& 	 1.00e+000& 	 1.4603e+006\\
&    APLM-S& 	 1.79e+002& 	 9& 	 1.74e+001& 	 1.4603e+006\\
&   FISTA-p& 	 4.77e+001& 	 9& 	 8.56e+000& 	 1.4603e+006\\
&     FISTA& 	 3.28e+001& 	 12& 	 1.36e+001& 	 1.4603e+006\\
&  ProxGrad& 	 7.96e+002& 	 5608& 	 -& 	 -\\
\hline
\multirow{4}{*}{ogl-5000-1000-10-3}
&      ADAL& 	 2.83e+002& 	 151& 	 1.00e+000& 	 2.6746e+006\\
&    APLM-S& 	 8.06e+002& 	 10& 	 2.76e+001& 	 2.6746e+006\\
&   FISTA-p& 	 2.49e+002& 	 10& 	 1.28e+001& 	 2.6746e+006\\
&     FISTA& 	 5.21e+001& 	 13& 	 1.55e+001& 	 2.6746e+006\\
&  ProxGrad& 	 1.64e+003& 	 6471& 	 -& 	 -\\
\hline\hline
\end{tabular}
\caption{Numerical results for ogl set 1.  For ProxGrad, Avg Sub-Iters and $F(x)$ fields are not applicable since the algorithm is not based on an outer-inner iteration scheme, and the objective function that it minimizes is different from ours.  We tested ten problems with $J = 100, \cdots, 1000$, but only show the results for three of them to save space.}
\label{tab:chen_data1}
\end{center}
\end{table}

\begin{table}
\begin{center}
\begin{tabular}{|c|c|c|c|c|c|}
\hline\hline
Data Sets & Algs & CPU (s) & Iters & Avg Sub-iters &$F(x)$\\
\hline
\multirow{5}{*}{ogl-1000-200-10-3}
&      ADAL& 	 4.18e+000& 	 77& 	 1.00e+000& 	 9.6155e+004\\
&    APLM-S& 	 1.64e+001& 	 9& 	 2.32e+001& 	 9.6156e+004\\
&   FISTA-p& 	 3.85e+000& 	 9& 	 1.02e+001& 	 9.6156e+004\\
&     FISTA& 	 2.92e+000& 	 11& 	 1.44e+001& 	 9.6158e+004\\
&  ProxGrad& 	 1.16e+002& 	 4137& 	 -& 	 -\\
\hline
\multirow{5}{*}{ogl-5000-200-10-3}
&      ADAL& 	 5.04e+000& 	 63& 	 1.00e+000& 	 4.1573e+005\\
&    APLM-S& 	 8.42e+000& 	 8& 	 8.38e+000& 	 4.1576e+005\\
&   FISTA-p& 	 3.96e+000& 	 9& 	 6.56e+000& 	 4.1572e+005\\
&     FISTA& 	 6.54e+000& 	 10& 	 9.70e+000& 	 4.1573e+005\\
&  ProxGrad& 	 1.68e+002& 	 4345& 	 -& 	 -\\
\hline
\multirow{5}{*}{ogl-10000-200-10-3}
&      ADAL& 	 6.41e+000& 	 44& 	 1.00e+000& 	 1.0026e+006\\
&    APLM-S& 	 1.46e+001& 	 10& 	 7.60e+000& 	 1.0026e+006\\
&   FISTA-p& 	 5.60e+000& 	 10& 	 5.50e+000& 	 1.0026e+006\\
&     FISTA& 	 1.09e+001& 	 10& 	 8.50e+000& 	 1.0027e+006\\
&  ProxGrad& 	 3.31e+002& 	 6186& 	 -& 	 -\\
\hline\hline
\end{tabular}
\caption{Numerical results for ogl set 2.  We ran the test for ten problems with $n = 1000, \cdots, 10000$, but only show the results for three of them to save space.}
\label{tab:chen_data2}
\end{center}
\end{table}

%\begin{table}
%\begin{center}
%\begin{tabular}{|c|c|c|c|c|c|}
%\hline\hline
%Data Sets & Algs & CPU (s) & Iters & Avg Sub-iters &$F(x)$\\
%\hline
%\multirow{5}{*}{ogl-dct-100-1000-1}
%&      ADAL& 	 1.80e+001& 	 230& 	 1.00e+000& 	 9.5394e+000\\
%&    APLM-S& 	 1.05e+002& 	 14& 	 6.26e+001& 	 9.5399e+000\\
%&   FISTA-p& 	 2.34e+001& 	 14& 	 1.90e+001& 	 9.5397e+000\\
%&     ALM-S& 	 1.13e+002& 	 44& 	 2.57e+001& 	 9.5472e+000\\
%&     FISTA& 	 1.53e+002& 	 46& 	 3.35e+001& 	 9.5397e+000\\
%\hline
%\multirow{5}{*}{ogl-dct-1024-10000-1}
%&      ADAL& 	 3.65e+002& 	 405& 	 1.00e+000& 	 6.4760e+002\\
%&    APLM-S& 	 1.11e+003& 	 14& 	 6.66e+001& 	 6.4765e+002\\
%&   FISTA-p& 	 2.52e+002& 	 16& 	 1.74e+001& 	 6.4762e+002\\
%&     ALM-S& 	 1.39e+003& 	 44& 	 2.60e+001& 	 6.4786e+002\\
%&     FISTA& 	 1.52e+003& 	 46& 	 4.13e+001& 	 6.4762e+002\\
%\hline
%\multirow{2}{*}{ogl-dct-1024-100000-1}
%&      ADAL& 	 1.23e+004& 	 1689& 	 1.00e+000& 	 6.5140e+003\\
%&   FISTA-p& 	 8.51e+003& 	 43& 	 2.72e+001& 	 6.5146e+003\\
%\hline\hline
%\end{tabular}
%\caption{Numerical results for dct set 1.  We kept $\mu$ constant at 0.1 for ADAL, i.e. $K_\mu = \infty$, as opposed to 20 for the previous data sets.  FISTA-p was run with the extra stopping criterion \eqref{eq:stopping_rel_y} on the relative change in the iterates.  We ran only ADAL and FISTA-p on the last data set because the computation times for the other three algorithms were expected to well exceed $10^4$ seconds.}
%\label{tab:dct_results}
%\end{center}
%\end{table}

\begin{table}
\begin{center}
\begin{tabular}{|c|c|c|c|c|c|}
\hline\hline
Data Sets & Algs & CPU (s) & Iters & Avg Sub-iters &$F(x)$\\
\hline
\multirow{3}{*}{ogl-dct-1000-5000-1}
&	ADAL&	1.14e+001&	194&	1.00e+000&	8.4892e+002\\
&	FISTA-p&	1.21e+001&	20&	1.11e+001&	8.4892e+002\\
&	FISTA&	2.49e+001&	24&	2.51e+001&	8.4893e+002\\
\hline
\multirow{3}{*}{ogl-dct-1000-10000-1}
&	ADAL&	3.31e+001&	398&	1.00e+000&	1.4887e+003\\
&	FISTA-p&	2.54e+001&	41&	5.61e+000&	1.4887e+003\\
&	FISTA&	6.33e+001&	44&	1.74e+001&	1.4887e+003\\
\hline
\multirow{3}{*}{ogl-dct-1000-15000-1}
&	ADAL&	6.09e+001&	515&	1.00e+000&	2.7506e+003\\
&	FISTA-p&	3.95e+001&	52&	4.44e+000&	2.7506e+003\\
&	FISTA&	9.73e+001&	54&	1.32e+001&	2.7506e+003\\
\hline
\multirow{3}{*}{ogl-dct-1000-20000-1}
&	ADAL&	9.52e+001&	626&	1.00e+000&	3.3415e+003\\
&	FISTA-p&	6.66e+001&	63&	6.10e+000&	3.3415e+003\\
&	FISTA&	1.81e+002&	64&	1.61e+001&	3.3415e+003\\
\hline
\multirow{3}{*}{ogl-dct-1000-25000-1}
&	ADAL&	1.54e+002&	882&	1.00e+000&	4.1987e+003\\
&	FISTA-p&	7.50e+001&	88&	3.20e+000&	4.1987e+003\\
&	FISTA&	1.76e+002&	89&	8.64e+000&	4.1987e+003\\
\hline
\multirow{3}{*}{ogl-dct-1000-30000-1}
&	ADAL&	1.87e+002&	957&	1.00e+000&	4.6111e+003\\
&	FISTA-p&	8.79e+001&	96&	2.86e+000&	4.6111e+003\\
&	FISTA&	2.24e+002&	94&	8.54e+000&	4.6111e+003\\
\hline\hline
\end{tabular}
\caption{Numerical results for dct set 2 (scalability test) with $l_1/l_2$-regularization.  All three algorithms were ran in factorization mode with a fixed $\mu = \mu_0$.}
\label{tab:dct_scal_results_l12}
\end{center}
\end{table}

\begin{table}
\begin{center}
\begin{tabular}{|c|c|c|c|c|c|}
\hline\hline
Data Sets & Algs & CPU (s) & Iters & Avg Sub-iters &$F(x)$\\
\hline
\multirow{4}{*}{ogl-dct-1000-5000-1}
&	ADAL&	1.53e+001&	266&	1.00e+000&	7.3218e+002\\
&	FISTA-p&	1.61e+001&	10&	3.05e+001&	7.3219e+002\\
&	FISTA&	3.02e+001&	16&	4.09e+001&	7.3233e+002\\
&	ProxFlow&	1.97e+001&	-&	-&	7.3236e+002\\
\hline
\multirow{4}{*}{ogl-dct-1000-10000-1}
&	ADAL&	3.30e+001&	330&	1.00e+000&	1.2707e+003\\
&	FISTA-p&	3.16e+001&	10&	3.10e+001&	1.2708e+003\\
&	FISTA&	7.27e+001&	24&	3.25e+001&	1.2708e+003\\
&	ProxFlow&	3.67e+001&	-&	-&	1.2709e+003\\
\hline
\multirow{4}{*}{ogl-dct-1000-15000-1}
&	ADAL&	4.83e+001&	328&	1.00e+000&	2.2444e+003\\
&	FISTA-p&	5.40e+001&	15&	2.52e+001&	2.2444e+003\\
&	FISTA&	8.64e+001&	23&	2.66e+001&	2.2449e+003\\
&	ProxFlow&	9.91e+001&	-&	-&	2.2467e+003\\
\hline
\multirow{4}{*}{ogl-dct-1000-20000-1}
&	ADAL&	8.09e+001&	463&	1.00e+000&	2.6340e+003\\
&	FISTA-p&	8.09e+001&	16&	2.88e+001&	2.6340e+003\\
&	FISTA&	1.48e+002&	26&	2.93e+001&	2.6342e+003\\
&	ProxFlow&	2.55e+002&	-&   -&	2.6357e+003\\
\hline
\multirow{4}{*}{ogl-dct-1000-25000-1}
&	ADAL&	7.48e+001&	309&	1.00e+000&	3.5566e+003\\
&	FISTA-p&	1.15e+002&	30&	1.83e+001&	3.5566e+003\\
&	FISTA&	2.09e+002&	38&	2.30e+001&	3.5568e+003\\
&	ProxFlow&	1.38e+002&	-&	-&	3.5571e+003\\
\hline
\multirow{4}{*}{ogl-dct-1000-30000-1}
&	ADAL&	9.99e+001&	359&	1.00e+000&	3.7057e+003\\
&	FISTA-p&	1.55e+002&	29&	2.17e+001&	3.7057e+003\\
&	FISTA&	2.60e+002&	39&	2.25e+001&	3.7060e+003\\
&	ProxFlow&	1.07e+002&	-&	-&	3.7063e+003\\
\hline\hline
\end{tabular}
\caption{Numerical results for dct set 2 (scalability test) with $l_1/l_\infty$-regularization.  The algorithm configurations are exactly the same as in Table \ref{tab:dct_scal_results_l12}.}
\label{tab:dct_scal_results_l1inf}
\end{center}
\end{table}

\begin{table}
\begin{center}
\begin{tabular}{|c|c|c|c|c|c|}
\hline\hline
Data Sets & Algs & CPU (s) & Iters & Avg Sub-iters &$F(x)$\\
\hline
\multirow{3}{*}{ogl-dct-1000-5000-1}
&	FISTA-p&	1.83e+001&	12&	2.34e+001&	8.4892e+002\\
&	FISTA&	2.49e+001&	24&	2.51e+001&	8.4893e+002\\
&	ADAL&	1.35e+001&	181&	1.00e+000&	8.4892e+002\\
\hline
\multirow{3}{*}{ogl-dct-1000-10000-1}
&	FISTA-p&	3.16e+001&	14&	1.73e+001&	1.4887e+003\\
&	FISTA&	6.33e+001&	44&	1.74e+001&	1.4887e+003\\
&	ADAL&	4.43e+001&	270&	1.00e+000&	1.4887e+003\\
\hline
\multirow{3}{*}{ogl-dct-1000-15000-1}
&	FISTA-p&	4.29e+001&	14&	1.51e+001&	2.7506e+003\\
&	FISTA&	9.73e+001&	54&	1.32e+001&	2.7506e+003\\
&	ADAL&	5.37e+001&	216&	1.00e+000&	2.7506e+003\\
\hline
\multirow{3}{*}{ogl-dct-1000-20000-1}
&	FISTA-p&	7.53e+001&	13&	2.06e+001&	3.3416e+003\\
&	FISTA&	1.81e+002&	64&	1.61e+001&	3.3415e+003\\
&	ADAL&	1.57e+002&	390&	1.00e+000&	3.3415e+003\\
\hline
\multirow{3}{*}{ogl-dct-1000-25000-1}
&	FISTA-p&	7.41e+001&	15&	1.47e+001&	4.1987e+003\\
&	FISTA&	1.76e+002&	89&	8.64e+000&	4.1987e+003\\
&	ADAL&	8.79e+001&	231&	1.00e+000&	4.1987e+003\\
\hline
\multirow{3}{*}{ogl-dct-1000-30000-1}
&	FISTA-p&	8.95e+001&	14&	1.58e+001&	4.6111e+003\\
&	FISTA&	2.24e+002&	94&	8.54e+000&	4.6111e+003\\
&	ADAL&	1.12e+002&	249&	1.00e+000&	4.6111e+003\\
\hline\hline
\end{tabular}
\caption{Numerical results for the DCT set with $l_1/l_2$-regularization.  FISTA-p and ADAL were ran in PCG mode with the dynamic scheme for updating $\mu$.  $\mu$ was fixed at $\mu_0$ for FISTA.}
\label{tab:tony_pcg_l12}
\end{center}
\end{table}

\begin{table}
\begin{center}
\begin{tabular}{|c|c|c|c|c|c|}
\hline\hline
Data Sets & Algs & CPU (s) & Iters & Avg Sub-iters &$F(x)$\\
\hline
\multirow{2}{*}{ogl-dct-1000-5000-1}
&	FISTA-p&	2.30e+001&	11&	2.93e+001&	7.3219e+002\\
&	ADAL&	1.89e+001&	265&	1.00e+000&	7.3218e+002\\
&	FISTA&	3.02e+001&	16&	4.09e+001&	7.3233e+002\\
&	ProxFlow&	1.97e+001&	-&	-&	7.3236e+002\\
\hline
\multirow{2}{*}{ogl-dct-1000-10000-1}
&	FISTA-p&	5.09e+001&	11&	3.16e+001&	1.2708e+003\\
&	ADAL&	4.77e+001&	323&	1.00e+000&	1.2708e+003\\
&	FISTA&	7.27e+001&	24&	3.25e+001&	1.2708e+003\\
&	ProxFlow&	3.67e+001&	-&	-&	1.2709e+003\\
\hline
\multirow{2}{*}{ogl-dct-1000-15000-1}
&	FISTA-p&	6.33e+001&	12&	2.48e+001&	2.2445e+003\\
&	ADAL&	9.41e+001&	333&	1.00e+000&	2.2444e+003\\
&	FISTA&	8.64e+001&	23&	2.66e+001&	2.2449e+003\\
&	ProxFlow&	9.91e+001&	-&	-&	2.2467e+003\\
\hline
\multirow{2}{*}{ogl-dct-1000-20000-1}
&	FISTA-p&	8.21e+001&	12&	2.42e+001&	2.6341e+003\\
&	ADAL&	1.59e+002&	415&	1.00e+000&	2.6340e+003\\
&	FISTA&	1.48e+002&	26&	2.93e+001&	2.6342e+003\\
&	ProxFlow&	2.55e+002&	-&	-&	2.6357e+003\\
\hline
\multirow{2}{*}{ogl-dct-1000-25000-1}
&	FISTA-p&	1.43e+002&	13&	2.98e+001&	3.5567e+003\\
&	ADAL&	1.20e+002&	310&	1.00e+000&	3.5566e+003\\
&	FISTA&	2.09e+002&	38&	2.30e+001&	3.5568e+003\\
&	ProxFlow&	1.38e+002&	-&	-&	3.5571e+003\\
\hline
\multirow{2}{*}{ogl-dct-1000-30000-1}
&	FISTA-p&	1.75e+002&	13&	3.18e+001&	3.7057e+003\\
&	ADAL&	2.01e+002&	361&	1.00e+000&	3.7057e+003\\
&	FISTA&	2.60e+002&	39&	2.25e+001&	3.7060e+003\\
&	ProxFlow&	1.07e+002&	-&	-&	3.7063e+003\\
\hline\hline
\end{tabular}
\caption{Numerical results for the DCT set with $l_1/l_\infty$-regularization.  FISTA-p and ADAL were ran in PCG mode. The dynamic updating scheme for $\mu$ was applied to FISTA-p, while $\mu$ was fixed at $\mu_0$ for ADAL and FISTA.  }
\label{tab:tony_pcg_l1inf}
\end{center}
\end{table}

\begin{table}
\begin{center}
\begin{tabular}{|c|c|c|c|c|c|}
\hline\hline
Data Sets & Algs & CPU (s) & Iters & Avg Sub-iters &$F(x)$\\
\hline
\multirow{6}{*}{BreastCancerData}
&      ADAL& 	 6.24e+000& 	 136& 	 1.00e+000& 	 2.9331e+003\\
&    APLM-S& 	 4.02e+001& 	 12& 	 4.55e+001& 	 2.9331e+003\\
&   FISTA-p& 	 6.86e+000& 	 12& 	 1.48e+001& 	 2.9331e+003\\
&     FISTA& 	 5.11e+001& 	 75& 	 1.29e+001& 	 2.9340e+003\\
&  ProxGrad& 	 7.76e+002& 	 6605& 	 1.00e+000& 	 -\\
\hline\hline
\end{tabular}
\caption{Numerical results for Breast Cancer Data using $l_1/l_2$-regularization.  In this experiment, we kept $\mu$ constant at 0.01 for ADAL.  The CPU time is for a single run on the entire data set with the value of $\lambda$ selected to minimize the RMSE in Figure \ref{fig:breast_cancer_plots}. }
\label{tab:breast_cancer_results}
\end{center}
\end{table}

\bibliographystyle{abbrv}
\bibliography{tony_bib}

\end{document}